\newtheorem{theorem}{Theorem}[section]
\newtheorem{lemma}[theorem]{Lemma}
\newtheorem{proposition}[theorem]{Proposition}
\newtheorem{corollary}[theorem]{Corollary}
\newtheorem*{question}{Question}
\theoremstyle{definition}
\newtheorem*{definition}{Definition}
\newcommand{\tor}{\operatorname{tor}}
\newcommand{\Id}{\operatorname{Id}}
\title{Mutation and $\operatorname{SL}(2,\mathbf C)$-Reidemeister torsion for hyperbolic knots.}
\author{Pere Menal-Ferrer \and Joan Porti 
\thanks{Both authors partially supported by the 
Spanish Micinn through grant MTM2009-0759 and by 
the Catalan AGAUR through grant SGR2009-1207. 
The second author received the prize “ICREA Acad\`emia” 
for excellence in research, funded by the Generalitat de Catalunya.}
}
\begin{document}
\maketitle

\begin{abstract}
Given a hyperbolic knot, we prove that the Reidemeister torsion of any lift of the holonomy to  $\operatorname{SL}(2,\mathbf C)$
is invariant under mutation along a Conway sphere. 
\end{abstract}


\section{Introduction}

Let $K\subset S^3$ be a hyperbolic knot and $C\subset S^3$ a Conway sphere. Namely $C$ intersects transversally $K$ in four points.
We write $\tau=\tau_i:(C, C\cap K)\to (C, C\cap K)$ to denote any of the three involutions in Figure~\ref{fig:involutions}, for $i=1,2,3$.

\begin{figure}[t]
\begin{center}
{\input{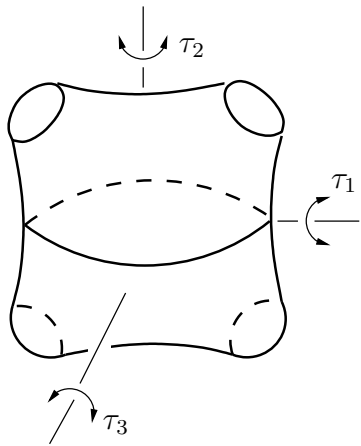}}
\end{center}
   \caption{The involutions on the Conway sphere}\label{fig:involutions}
\end{figure}

The knot $K^\tau\subset S^3$ obtained by cutting along $C$ and gluing again after composing with $\tau$ is called the mutant knot.
We are interested in comparing invariants of $K$ and $K^\tau$, thus we may assume that, if $M= S^3\setminus\mathcal N (K)$ denotes the knot
exterior, then the  surface $S=C\cap M$ is essential in $M$.

Ruberman \cite{Ruberman} showed that $K^\tau$ is also hyperbolic, and that $M^\tau=S^3\setminus\mathcal N (K^\tau)$ has the same volume as $M=S^3\setminus\mathcal N (K)$.
See \cite{DGST,MortonRyder} for a recent account on invariants that distinguish or not $K$ from $K^\tau$.

Let $\rho:\pi_1(M)\to \operatorname{SL}(2,\mathbf C)$ be a lift of the holonomy. If $\mu\in\pi_1(M)$ is a meridian, then $\operatorname{trace}(\rho(\mu))=\pm2 $,
and there are precisely two lifts of the holonomy up to conjugation, one with $\operatorname{trace}(\rho(\mu))=+2 $ and another with 
$\operatorname{trace}(\rho(\mu))= - 2 $.

 By \cite{MenalPorti}, $\rho$ is acyclic, namely the homology and cohomology of $M$ with coefficients twisted by $\rho$
vanish, hence the Reidemeister torsion $\operatorname{tor}(M,\rho)$ is
well defined. Moreover as the dimension of $\mathbf C^2$ is even, there is no sign indeterminacy, thus 
$\operatorname{tor}(M,\rho)$ is a well defined nonzero complex number, independent of the conjugacy class of $\rho$. 
Hence these torsions  are two topological invariants of the hyperbolic knot.

\begin{theorem}
\label{thm:main} 
Let $K$, $M$, $\tau$ and $M^\tau$ be as above. Let $\rho:\pi_1(M)\to \operatorname{SL}(2,\mathbf C)$ and $\rho^\tau:\pi_1(M^\tau)\to \operatorname{SL}(2,\mathbf C)$
be lifts of the holonomy, with $\operatorname{trace}(\rho(\mu))=\operatorname{trace}(\rho^\tau(\mu))$. Then
$$
\operatorname{tor}(M,\rho)=\operatorname{tor}(M^\tau,\rho^\tau).
$$
 \end{theorem}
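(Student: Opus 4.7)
The plan is to decompose $M$ along the essential surface $S = C \cap M$ and apply a Mayer--Vietoris multiplicativity formula for Reidemeister torsion. Assume for concreteness that $S$ is separating, so $M = M_1 \cup_S M_2$ (the non-separating case is analogous via HNN extensions). The mutated manifold is $M^\tau = M_1 \cup_{S,\tau} M_2$, with $\pi_1(M^\tau) = \pi_1(M_1) *_{\pi_1(S),\,\tau_*} \pi_1(M_2)$.

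The central observation is a $\tau$-invariance statement for $\rho|_S$. The group $\pi_1(S)$ is free of rank three, generated by peripheral loops $a_1,\ldots,a_4$ around the punctures with $a_1a_2a_3a_4 = 1$; each $\tau_{i*}$ acts by a pair of disjoint transpositions of the $a_j$, up to conjugation and inversion in $\pi_1(S)$. Since traces of words of matrices in $\operatorname{SL}(2,\mathbf{C})$ are invariant under $g \mapsto g^{-1}$ and under conjugation, the characters of $\rho|_S$ and $\rho|_S \circ \tau_{i*}$ coincide. Assuming $\rho|_S$ is irreducible, which is expected since $S$ is essential, there exists $A \in \operatorname{SL}(2,\mathbf{C})$, unique up to sign, with $\rho|_S \circ \tau_{i*} = A^{-1} \rho|_S A$; this is the $\operatorname{SL}(2,\mathbf{C})$ analog of the character-variety argument used by Ruberman. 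One then defines $\rho^\tau$ by $\rho^\tau|_{M_1} = \rho|_{M_1}$ and $\rho^\tau|_{M_2} = A\,\rho|_{M_2}\,A^{-1}$; the intertwining identity makes this well defined on the amalgamated product, and the meridian trace is preserved since $\mu$ can be chosen in $\pi_1(M_1)$.

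Applying the multiplicativity of torsion to the decomposition $(M;M_1,M_2)$ with $\rho$-twisted coefficients yields
$$
\tor(M,\rho) = \pm\,\tor(M_1,\rho|_{M_1}) \cdot \tor(M_2,\rho|_{M_2}) \cdot \tor(S,\rho|_S)^{-1} \cdot \tor(\mathcal{E}),
$$
where $\mathcal{E}$ is the Mayer--Vietoris long exact sequence, whose torsion depends only on abstract connecting maps. The analogous formula for $M^\tau$ differs only in that $\rho|_{M_2}$ is conjugated by $A$, which leaves its torsion unchanged, while $\rho^\tau|_S$ equals $\rho|_S$ when read through the $M_1$-side. Hence all four factors coincide, giving $\tor(M,\rho) = \tor(M^\tau,\rho^\tau)$.

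The main obstacle is making this multiplicativity rigorous: the pieces $\rho|_{M_j}$ and $\rho|_S$ are generally not acyclic because of boundary, so one must work with relative torsions and carefully coordinate bases on the twisted boundary chains, applying Milnor's multiplicativity lemma at the chain level with compatible choices. A secondary subtlety is verifying that the conjugating matrix $A$ can actually be chosen in $\operatorname{SL}(2,\mathbf{C})$ rather than only in $\operatorname{PSL}(2,\mathbf{C})$, and that $\rho|_S$ is irreducible; both points should follow from the essentiality of $S$ and from the hyperbolicity of $M$ and $M^\tau$ via standard character-variety arguments.
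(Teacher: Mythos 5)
Your setup (the decomposition $M=M_1\cup_S M_2$, the intertwiner $A$ unique up to sign, the definition of $\rho^\tau$ by conjugating on the $M_2$ side) matches the paper. But the final step --- ``all four factors coincide'' --- contains the essential gap, and it is not the technical bookkeeping issue you describe; it is the entire mathematical content of the theorem. Because $H^1(S;\rho|_S)\cong\mathbf C^4$ and $H^1(M_k;\rho|_{M_k})\cong\mathbf C^2$ are nonzero, the torsion of the Mayer--Vietoris sequence is \emph{not} an abstract quantity depending ``only on connecting maps'': after fixing bases $b_1,b_2$ of $H^1(M_1)$ and $H^1(M_2)$, the ratio of the two torsions works out to
$$
\frac{\tor(M,\rho)}{\tor(M^\tau,\rho^\tau)}=\det\bigl(\,i_1^*(b_1)\sqcup \tau_a^*(i_2^*(b_2))\ \big|\ i_1^*(b_1)\sqcup i_2^*(b_2)\,\bigr),
$$
the change-of-basis determinant on $H^1(S;\rho|_S)$ induced by replacing $\operatorname{Im}(i_2^*)$ by its image under the involution $\tau_a^*=a_*\circ\tau^*$. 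There is no a priori reason for this determinant to equal $1$; it measures how $\tau_a^*$ moves the plane $\operatorname{Im}(i_2^*)$ relative to $\operatorname{Im}(i_1^*)$ inside $\mathbf C^4$. A decisive sanity check: your argument never uses that $\rho$ is a lift of the holonomy, so it would apply verbatim to $\rho$ tensored with the abelianization, i.e.\ to the twisted Alexander polynomial --- which Dunfield, Friedl and Jackson showed is \emph{not} mutation invariant. So the argument proves too much and must be incomplete exactly where you wave at ``coordinating bases.''

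What the paper actually does to close this gap: it shows $(\tau_a^*)^2=-\operatorname{Id}$, so the determinant is $\pm1$ provided $\tau_a^*$ preserves $\operatorname{Im}(i_2^*)$; it then proves this invariance (for $\operatorname{trace}\rho(\mu)=-2$) by endowing $H^1(S;\rho|_S)$ with a symmetric perfect pairing pulled back from Poincar\'e duality on $\partial M_k$, showing $\operatorname{Im}(i_1^*)$ and $\operatorname{Im}(i_2^*)$ are isotropic planes, i.e.\ lines in the ruled quadric in $\mathbf P^3$, and using the splitting $\operatorname{PSO}(4,\mathbf C)\cong\operatorname{PSL}(2,\mathbf C)\times\operatorname{PSL}(2,\mathbf C)$ together with the fact that the \emph{three} mutation involutions generate a Klein four-group with no common fixed point on $\mathbf P^1$, plus a deformation of the hyperbolic structure on $M_2$ to one matching a symmetric tangle, to force $\operatorname{Im}(i_2^*)$ into the $\tau_a^*$-invariant ruling. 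The sign and the case $\operatorname{trace}\rho(\mu)=+2$ each need a further limiting argument (continuity along the deformation, and approximation by orbifold holonomies with cone angle $2\pi/n$, respectively). None of this is optional, and none of it is present in your proposal.
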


This is not true for any representation of $\pi_1(M)$. Wada proved in \cite{Wada} that the twisted Alexander polynomials could be used to distinguish mutant knots. 
N. Dunfield, S. Friedl and N. Jackson \cite{DFJ} computed the torsion for the representation $\rho$ twisted by the abelianization map 
(namely, the corresponding twisted Alexander polynomials) and proved that it distinguishes mutant knots. However, the 
evaluation at $\pm 1$ of these polynomials gives numerical evidence of Theorem~\ref{thm:main}.

In  \cite{MenalPorti}
 we prove that when we consider $\sigma_{2n}:\operatorname{SL}(2,\mathbf C)\to \operatorname{SL}(2n,\mathbf C)$ the $2n$-dimensional irreducible representation, then
the composition $\sigma_{2n}\circ\rho$ is acyclic, thus its torsion is well defined. We have computed that the torsion of $\sigma_{4}\circ\rho$ distinguishes
the Conway  and the Kinoshita-Terasaka mutants, see Section~\ref{sec:concluding}.

The paper is organized as follows. In Section~\ref{sec:mutation}, we discuss the basic constructions for Reidemeister torsion and representations of mutants, and we give a sufficient criterion
in Proposition~\ref{prop:criterion} for invariance of 
the torsion under mutation. The sufficient criterion of Proposition~\ref{prop:criterion} is stated in terms of the action of $\tau$ on the cohomology of $S$ with twisted coefficients. 
 This is applied for the proof when $\operatorname{trace}(\rho(\mu))=-2$ in Section~\ref{sec:trace-2}. The proof when
$\operatorname{trace}(\rho(\mu))=+2$ in Section~\ref{sec:trace+2} is  different, because the involved cohomology groups are different in each situation.
In Section~\ref{section:example} we compute an example, the Kinoshita-Terasaka and Conway mutants, and
Section~\ref{sec:concluding} is 
devoted to further discussion.

\section{Mutation}
\label{sec:mutation}

Let $B_1$ and $B_2$ denote the components of $S^3\setminus {\mathcal N}(C)$, so that the pairs $(B_i,B_i\cap K)$ are tangles with two strings. The 
exterior of the knot is denoted by
$$
M=S^3\setminus \mathcal N(K).
$$
We also denote
$$
S=C\cap M,\quad M_1=M\cap B_1,\quad \textrm{ and }\quad M_2=M\cap B_2.
$$
Write a commutative diagram for the inclusions:
 $$
\xymatrix{ 
C \ar  [r]^{i_1}  \ar[d]_{i_2} & B_1 \ar[d] \\
B_2 \ar[r] & S^3
} 
\qquad\textrm{ and }\qquad
\xymatrix{ 
S \ar  [r]^{i_1}  \ar[d]_{i_2} & M_1 \ar[d] \\
M_2 \ar[r] & M
} 
$$
so that $\pi_1(M$) is an amalgamated product
$$
\pi_1(M)=\pi_1(M_1) *_{\pi_1(S)} \pi_1(M_2).
$$ 
Let $\rho_0$, $\rho_1$ and $\rho_2$ denote the restriction of $\rho$ to $\pi_1(S)$, $\pi_1(M_1)$, and $\pi_1(M_2)$, respectively, so that 
$$
 \rho_1\circ i_{1*}= \rho_2\circ i_{2*}= \rho_0.
$$
Using the notation
$$
\rho_0^a(\gamma)=a\rho_0(\gamma) a^{-1}, \qquad\forall \gamma\in\pi_1(S),
$$
there exists $a\in\operatorname{SL}(2,\mathbf C)$ which is unique up to sign \cite{CL,Ruberman,Tillmann}
such that
$$
\rho_0^a\circ \tau_*= \tau^*\circ \rho_0^a=\rho_0.
$$
Notice that $a\in\operatorname{SL}(2,\mathbf C)$ corresponds to a rotation of order two in hyperbolic space, therefore, 
$a$ is conjugate to
\begin{equation}
\label{eqn:a} 
a \sim
\begin{pmatrix}
   i & 0 \\
   0 & -i
\end{pmatrix}
.
\end{equation}

 To construct the representation of $\pi_1(M^{\tau})$, we also use the amalgamated product structure with the same inclusion $i_1$,
 but with $i_2\circ\tau$ instead of $i_2$. The representation $\rho^\tau:\pi_1(M^\tau)\to \operatorname{SL}(2,\mathbf C)$
is then defined by
$$
\rho^\tau\vert _{\pi_1(M_1)}=\rho_1\qquad \textrm{ and } \qquad \rho^\tau\vert _{\pi_1(M_2)}=\rho_2^a,
$$
because $\rho_2^a\circ(i_2\circ\tau)_*= \rho_0^a\circ\tau_*=\rho_0=\rho_1\circ i_{1*}$.

\subsection{Cohomology with twisted coefficients}

To set notation, we   recall the basic construction of  cohomology with twisted coefficients. Let $X$ be a compact $CW$-complex and $\rho:\pi_1(X)\to \operatorname{SL}(2,\mathbf C)$ a representation.
The cellular chains of its universal covering are denoted by $C_*(\tilde X; \mathbf Z)$, which is a chain complex of left $\mathbf Z[\pi_1(X)]$-modules of finite type. The cochains with twisted 
coefficients are then  
\begin{equation*}
 C^*(X;\rho)=\hom_{\mathbf Z[\pi_1(X)]}( C_*(\tilde X; \mathbf Z),    \mathbf C^2_{\rho} ),
\end{equation*}
where $\mathbf C^2=\mathbf C^2_{\rho}$ is viewed as a  left $\mathbf Z[\pi_1(X)]$-module by the action induced by $\rho$.
The corresponding  cohomology groups are denoted by
$$
 H^*(\vert X\vert ;\rho),
$$
as they only depend on the underlying topological space $\vert X\vert$ of the $CW$-complex $X$. We shall mainly work with aspherical spaces, in this case 
the homology or cohomology of $X$ with twisted coefficients is naturally isomorphic to the group cohomology of $\pi_1(X)$.

We shall also be interested in the de Rham cohomology. Assuming $N$ is a smooth manifold, let $E(\rho)=\tilde N\times \mathbf C^2/\pi_1(N)$ denote the flat bundle with monodromy $\rho$.
The space of $p$-forms valued on $E(\rho)$ is $\Omega^p(N;\rho)=\Gamma(      \Lambda^p T N^*\otimes   E(\rho) )$. The de Rham cohomology of $(\Omega^*(N;\rho),d)$ is naturally isomorphic to $
 H^*(N ;\rho),
$

Many properties of cohomology without coefficients hold true when we have twisted coefficients: Mayer-Vietoris, long exact sequence of the pair, etc.
Poincar\'e duality is discussed in Section~\ref{subsection:perfectpairing}.

\subsection{Mayer-Vietoris exact sequences with twisted coefficients}

We will use Mayer-Vietoris for the pair $(M_1,M_2)$ to compute the torsion of $M$ and of $M^\tau$.
For this, we need to compute some cohomology groups. We start with the planar surface with four boundary components $S=M_1\cap M_2$:

\begin{lemma}
\label{lem:cohomS}
$H^i(S;\rho_0)=0$ for $i\neq 1$ and $H^1(S;\rho_0)\cong \mathbf C^4$.
 \end{lemma}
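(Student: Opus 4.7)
The plan is to exploit three facts: $S$ is a 4-holed sphere, $\pi_1(S)$ is free of rank three (in particular non-abelian), and the restriction $\rho_0$ is faithful. First, cutting the Conway sphere $C$ along its four intersection points with $K$ gives the planar surface $S$ with four boundary circles, which is homotopy equivalent to a wedge of three circles. In particular $S$ has the homotopy type of a 1-dimensional $CW$-complex, so $H^i(S;\rho_0) = 0$ automatically for $i \geq 2$.

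Next I would show that $H^0(S;\rho_0) = 0$. This cohomology group is the subspace of $\rho_0$-invariant vectors in $\mathbf C^2$. If there were a nonzero fixed vector $v$, then in a basis starting with $v$ every matrix $\rho_0(\gamma)$ would be upper triangular with $(1,1)$-entry equal to $1$; since the determinant is $1$, the $(2,2)$-entry is also $1$, and the image of $\rho_0$ would lie in the abelian unipotent subgroup. However $S$ is essential in $M$, so the inclusion $\pi_1(S) \hookrightarrow \pi_1(M)$ is injective, and the lift $\rho$ to $\operatorname{SL}(2,\mathbf C)$ is faithful (the kernel of $\operatorname{SL}(2,\mathbf C) \to \operatorname{PSL}(2,\mathbf C)$ is $\{\pm I\}$, and no nontrivial element of $\pi_1(M)$ can map to $\pm I$, since the $\operatorname{PSL}$-holonomy of a hyperbolic knot exterior is faithful). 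Hence $\rho_0$ would be an injective homomorphism from the non-abelian free group $F_3$ into an abelian group, a contradiction.

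Finally, I would read off $\dim H^1(S;\rho_0) = 4$ from the Euler characteristic. For any finite $CW$-complex $X$ and any finite-dimensional representation $\rho$, the twisted Euler characteristic equals $\dim V \cdot \chi(X)$; here $\chi(S) = 2 - 4 = -2$, so $\chi(S;\rho_0) = -4$. Combined with the vanishing of $H^0$ and $H^2$, this yields $\dim H^1(S;\rho_0) = 4$.

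The main delicate step is the vanishing of $H^0$: it relies on essentialness of $S$ (to ensure injectivity of $\pi_1(S) \to \pi_1(M)$) together with faithfulness of the $\operatorname{SL}(2,\mathbf C)$-lift of the holonomy. Both are available under the standing hypotheses of the paper, so once they are invoked the argument reduces to elementary linear algebra and the Euler-characteristic identity.
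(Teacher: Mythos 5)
Your proof is correct and follows essentially the same route as the paper: vanishing in degrees $\geq 2$ because $S$ is homotopy equivalent to a graph, vanishing of $H^0$ because $\rho_0$ has no nonzero invariant vectors, and the twisted Euler characteristic $\dim H^1 = -2\chi(S) = 4$. The only difference is cosmetic: where the paper invokes irreducibility of $\rho_0$ to kill $H^0$, you derive the absence of a fixed vector directly from faithfulness of $\rho_0$ on the non-abelian free group $\pi_1(S)$ (a fixed vector would force the image into the abelian unipotent subgroup), which is a perfectly valid and in fact self-contained substitute.
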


\begin{proof}
 Firstly $H^0(S,\rho_0)\cong H^0(\pi_1(S),\rho_0)$ is isomorphic to the subspace of $\mathbf C^2$ of elements that are fixed by $\rho_0(\pi_1(S))$, hence it vanishes
because $\rho_0$ is an irreducible representation.
On the other hand,   $H^i(S,\rho_0)=0$ for $i\geq 2$, because $S$ has the homotopy type of a graph. Finally
$$
\dim_ {\mathbf C} H^1(S;\rho_0)=-\chi(S) \dim (\mathbf C^2)=4.
$$
\end{proof}

\begin{lemma}
\label{lem:cohomM_i}
For $k=1,2$, $H^j(M_k;\rho_0)=0$ for $j\neq 1$ and $H^1(M_k;\rho_0)=\mathbf C^2$.
 \end{lemma}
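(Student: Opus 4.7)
The plan is to isolate $H^1$ by separately vanishing the other three twisted cohomology groups, using (i) irreducibility together with the essentiality of $S$, (ii) Mayer--Vietoris combined with the acyclicity of $\rho$ on $M$ from \cite{MenalPorti}, and (iii) an Euler-characteristic count.

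First I would show $H^0(M_k;\rho)=0$. Because $S$ is essential, the inclusion induces an injection $\pi_1(S)\hookrightarrow\pi_1(M_k)$, so the image of $\rho|_{\pi_1(M_k)}$ contains $\rho_0(\pi_1(S))$, which was already observed to be irreducible in the proof of Lemma~\ref{lem:cohomS}. In particular $\rho|_{\pi_1(M_k)}$ has no non-zero invariant vector, hence $H^0(M_k;\rho)=0$. Moreover $H^j(M_k;\rho)=0$ for $j\geq 3$, since $M_k$ is a compact $3$-manifold with non-empty boundary and therefore has the homotopy type of a $2$-complex.

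Next I would obtain $H^2(M_k;\rho)=0$ via the Mayer--Vietoris long exact sequence associated to the decomposition $M=M_1\cup_S M_2$ with coefficients in $\rho$. By \cite{MenalPorti} the representation $\rho$ is acyclic on $M$, and by Lemma~\ref{lem:cohomS} we have $H^2(S;\rho_0)=0$, so the relevant piece
\[
0=H^2(M;\rho)\to H^2(M_1;\rho)\oplus H^2(M_2;\rho)\to H^2(S;\rho_0)=0
\]
forces $H^2(M_k;\rho)=0$ for both $k=1,2$.

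Finally, to compute $\dim H^1(M_k;\rho)$ I would use an Euler-characteristic argument. The boundary $\partial M_k$ is obtained from the four-holed sphere $S$ (of Euler characteristic $-2$) by gluing two annuli along the four boundary circles, so $\chi(\partial M_k)=-2$, i.e.\ $\partial M_k$ is a closed orientable surface of genus two. Consequently $\chi(M_k)=\tfrac{1}{2}\chi(\partial M_k)=-1$, and the alternating sum of twisted cohomology dimensions equals $2\chi(M_k)=-2$; combined with the vanishing established above this yields $\dim H^1(M_k;\rho)=2$. I expect no genuine obstacle beyond invoking the acyclicity of $\rho$ on $M$ from \cite{MenalPorti}, which is the only non-formal ingredient; everything else is standard cohomological algebra together with the irreducibility of $\rho_0$.
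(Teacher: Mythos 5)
Your proof is correct and follows essentially the same route as the paper: Mayer--Vietoris for $M=M_1\cup_S M_2$ together with the acyclicity of $\rho$ on $M$ from \cite{MenalPorti}, plus the Euler characteristic $\chi(M_k)=-1$ to pin down $\dim H^1(M_k;\rho_k)=2$. The only cosmetic difference is that you dispose of $H^0$ and $H^{\geq 2}$ by separate direct arguments (irreducibility of $\rho_0$, the homotopy type of a $2$-complex, and one segment of the sequence), whereas the paper reads all the vanishing at once off the Mayer--Vietoris isomorphism $H^j(M_1;\rho_1)\oplus H^j(M_2;\rho_2)\cong H^j(S;\rho_0)$ combined with Lemma~\ref{lem:cohomS}.
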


\begin{proof}
By Mayer-Vietoris, and using the fact that $H^*(M;\rho)=0$, then 
 $$
H^j(M_1;\rho_1)\oplus H^j(M_2;\rho_2) \cong H^j(S;\rho_0).
$$
The lemma follows from Lemma~\ref{lem:cohomS}, because $\chi(M_k)=\frac12\chi(S)=1$.
\end{proof}

Mayer-Vietoris for $M$ and $M^\tau$ give the isomorphisms:
\begin{align}
\label{eqn:mv} i_1^*\oplus i_2^*: H^1(M_1;\rho_1) \oplus H^1(M_2;\rho_2) \to H^1(S;\rho_0), & \\
\label{eqn:mvtau} i_1^*\oplus (i_2\circ \tau)^*: H^1(M_1;\rho_1) \oplus H^1(M_2;\rho_2^a) \to H^1(S;\rho_0). & 
\end{align}

In order to relate $H^*(M_2;\rho_2)$ and $H^*(M_2;\rho_2^a)$, we use the composition with $a$,
where $\rho_2^a$ denotes $\rho_2$ conjugated by $a$.
Namely, recall that 
$$
C^*(M_2;\rho_2)=\hom_{\mathbf Z[\pi_1(M_2)]}( C_*(\widetilde M_2;\mathbf Z), \mathbf C^2_{\rho_2}).
$$
Define:
$$
\begin{array}{rcl}
 a_* :C^*(M_2;\rho_2) & \to &  C^*(M_2;\rho_2^a) \\
   \theta & \mapsto & a\circ\theta
\end{array}
$$
It is  straightforward to check that this defines an isomorphism of complexes. Thus 
$a_*: H^1(M_2;\rho_2) \to   H^1(M_2;\rho_2^a)$ is an isomorphism and we have a commutative diagram:
$$
\xymatrix{ 
H^1(M_2;\rho_2) \ar  [r]^{i_2^*}  \ar[d]_{a_*} & H^1(S;\rho_2) \ar[d]^{ a_*\circ\tau^*} \\
H^1(M_2;\rho_2^a) \ar[r]^{(i_2\circ\tau)^*} & H^1(S;\rho_0)
} 
$$
Write $\tau_a^*= a_*\circ\tau^*=\tau^*\circ a_*$. Since $a^2=-\operatorname{Id}$ (see Equation~(\ref{eqn:a})) and since $\tau^2=\operatorname{Id}$,
we have:
\begin{equation}
 \label{eqn:tau2=-id}
  (\tau_a^*)^2=-\operatorname{Id}.
\end{equation}

\subsection{Reidemeister torsions}

Let $X$ be a compact CW-complex equipped with a representation 
$$
\rho:\pi_1(X)\to\operatorname{SL}(2,\mathbf C).
$$
When $H^*(\vert X\vert;\rho)=0$, the Reidemeister torsion can be defined, and it is an invariant of $X$, up to subdivision,  and the conjugacy class of $\rho$.
We will not recall the definition, that can be found in \cite{MilnorBullAMS,TuraevKT}, for instance. There are two main issues for the torsion we are interested in. 
Firstly, the torsion is only defined up to sign, but since we consider a two dimensional vector space, it is sign defined, hence a nonzero complex number.
Equivalently, any choice of homology orientation for Turaev's refined torsion \cite{TuraevKT} gives the same result. Secondly, since we are working with
three and two-dimensional manifolds, the PL-structure is not relevant. Thus, for a two and three-dimensional manifold $X$ and an acyclic representation $\rho:\pi_1(X)\to \operatorname{SL}(2,\mathbf C)
$, its torsion is denoted by 
$$
\tor(\vert X\vert ,\rho)\in\mathbf C\setminus \{0\}.
$$
When $\rho$ is not acyclic, then we can also use the Reidemeister torsion provided we specify a basis for $H^*(\vert X\vert ;\rho)$.

Choose $b_i$ a basis for $H^1(M_i;\rho_i)$ as $\mathbf C$-vector space. In particular $a_*(b_2)$ is a basis for  $H^1(M_2;\rho_2^a)$.
By Milnor's formula \cite{MilnorBullAMS} for the torsion of a long exact sequence applied to (\ref{eqn:mv}) and (\ref{eqn:mvtau}):
\begin{align*}
 \tor(M,\rho)=&\pm \frac{\tor(M_1,\rho_1,b_1) \tor(M_2,\rho_2,b_2)} {\tor(S,\rho_0, i_1^*(b_1)\sqcup i_2^*(b_2)) }  \\
 \tor(M^\tau,\rho^\tau)=&\pm \frac{\tor(M_1,\rho_1,b_1) \tor(M_2,\rho_2^a,a_*(b_2))}{\tor(S,\rho_0, i_1^*(b_1)\sqcup (i_2\tau)^*a^*(b_2)) }.
\end{align*}
Here $\sqcup$ denotes the disjoint union of basis. Notice that Milnor works with torsions up to sign in \cite{MilnorBullAMS}, but its formalism applies
even with sign.
Since $\tau_a^*=\tau^*\circ a_*$, we deduce
\begin{equation}
 \label{eqn:ratiotorsions}
\frac{\tor(M,\rho)}{\tor(M^\tau,\rho^\tau) }=  \det (  i_1^*(b_1)\sqcup \tau_a^* (i_2^*(b_2)) , i_1^*(b_1)\sqcup i_2^*(b_2) ).
\end{equation}
Namely, the determinant of the matrix whose entries are the coefficients of the basis  $i_1^*(b_1)\sqcup  \tau_a^* (i_2^*(b_2))$
with respect to $i_1^*(b_1)\sqcup i_2^*(b_2)$.

The following is a sufficient criterion for invariance of torsion with respect to mutation.

\begin{proposition}
 \label{prop:criterion}
If $\tau_a^* : H^1(S;\rho_0)\to H^1(S;\rho_0)$ leaves invariant the  image of $i_2^*: H^1(M_2;\rho_2)\to H^1(S,\rho_0)$, then
$\tor(M,\rho)=\pm \tor(M^\tau,\rho^\tau)$.
\end{proposition}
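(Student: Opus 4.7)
The plan is to start from formula~(\ref{eqn:ratiotorsions}), which already expresses the ratio $\tor(M,\rho)/\tor(M^\tau,\rho^\tau)$ as the determinant of a change-of-basis matrix on $H^1(S;\rho_0)$. Thus it suffices to show that, under the hypothesis, this determinant equals $\pm 1$.

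First, I would use the Mayer--Vietoris isomorphism~(\ref{eqn:mv}) to write
$$
H^1(S;\rho_0)= i_1^*\bigl(H^1(M_1;\rho_1)\bigr)\oplus i_2^*\bigl(H^1(M_2;\rho_2)\bigr),
$$
so that $i_1^*(b_1)\sqcup i_2^*(b_2)$ is a basis adapted to this direct sum decomposition. The hypothesis says that $\tau_a^*$ preserves the second summand, hence each vector of $\tau_a^*(i_2^*(b_2))$ lies in $i_2^*(H^1(M_2;\rho_2))$ and has no component along $i_1^*(b_1)$. Therefore the matrix of $i_1^*(b_1)\sqcup \tau_a^*(i_2^*(b_2))$ in the basis $i_1^*(b_1)\sqcup i_2^*(b_2)$ is block upper triangular of the form
$$
\begin{pmatrix} \Id_2 & 0 \\ 0 & A \end{pmatrix},
$$
where $A$ is the matrix in the basis $i_2^*(b_2)$ of the restriction of $\tau_a^*$ to $i_2^*(H^1(M_2;\rho_2))$. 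The identity block in the upper-left appears because $i_1^*(b_1)$ is written in itself.

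The determinant in~(\ref{eqn:ratiotorsions}) therefore reduces to $\det(A)$. Since $i_2^*$ is injective on $H^1(M_2;\rho_2)\cong\mathbf C^2$ (by Lemma~\ref{lem:cohomM_i} and the Mayer--Vietoris isomorphism), the invariant subspace $i_2^*(H^1(M_2;\rho_2))$ is two-dimensional, and $A$ is a $2\times 2$ matrix. By~(\ref{eqn:tau2=-id}) we have $(\tau_a^*)^2=-\Id$, hence $A^2=-\Id_2$, which gives $\det(A)^2=\det(-\Id_2)=1$, so $\det(A)=\pm 1$.

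Putting it all together, $\tor(M,\rho)/\tor(M^\tau,\rho^\tau)=\pm 1$, as required. The main (and essentially only) point is the recognition that the hypothesis makes the change-of-basis matrix block triangular, so that the relation $(\tau_a^*)^2=-\Id$ controls the relevant determinant on a two-dimensional subspace; no further obstacle is expected.
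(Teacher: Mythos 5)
Your proof is correct and follows essentially the same route as the paper: both exploit the block structure of the change-of-basis matrix in~(\ref{eqn:ratiotorsions}) induced by the invariance hypothesis, and both use $(\tau_a^*)^2=-\Id$ to pin down the determinant of the $2\times 2$ block (the paper diagonalizes it with eigenvalues $\pm i$, while you square the determinant — a negligible difference).
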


\begin{proof}
Since $(\tau_a^*)^2= -\Id$
 by Formula (\ref{eqn:tau2=-id}), $\tau_a^*$ diagonalizes with eigenvalues $\pm i$. Hence, assuming that $\tau_a^*$
leaves invariant the  image of $i_2^*$,
the matrix in 
Equation~(\ref{eqn:ratiotorsions}) is conjugate to
$$
\begin{pmatrix}
    1 & 0 & 0 & 0 \\
    0 & 1 & 0 & 0 \\
    0 & 0 & \pm i & 0 \\
    0 & 0 & 0 & \pm i
\end{pmatrix},
$$
hence it has determinant $\pm 1$.
\end{proof}

\section{Invariance when $\operatorname{trace}(\mu)=-2$.}
\label{sec:trace-2}

We discuss first the case where $\operatorname{trace}(\mu)=-2$.  
The proof has three parts. In Subsection~\ref{subsection:perfectpairing}
we consider a perfect pairing on $
H^1(S;\rho_0)$ (pulling back the cup product on $H^1(\partial M_k;\rho_k)$
by the  isomorphism $H^1(S;\rho_0)\cong H^1(\partial M_k;\rho_k)$).
We show that, for $k=1$, $2$,  the images of $i_k^*:H^1(M_k;\rho_k)\to H^1(S;\rho_0)$ are isotropic subspaces.
Then in Subsection~\ref{subsec:quadric} we analyze properties of isotropic planes  of 
$H^1(S;\rho_0)\cong\mathbf C^4$, that are viewed as lines in a ruled quadric in $\mathbf P^3$.
The properties of this ruled quadric are used in a deformation argument in Subsection~\ref{subsection:firstdfm argument}
to conclude the proof when $\operatorname{trace}(\mu)=-2$.

\subsection{A perfect pairing}
\label{subsection:perfectpairing}

For a closed oriented $n$-manifold $N^n$ and a representation $\rho:\pi_1(N^n)\to \operatorname{SL}(2,\mathbf C)$, there is a nondegenrate pairing:
\begin{equation}
 \cup: H^k(N^n;\rho)\times H^{n-k}(N^n ; \rho)\to H^n(N^n ; \mathbf C^2\otimes \mathbf C^2)\to  H^n(N^n ; \mathbf C)\cong \mathbf C,
\end{equation}
which is the composition of the cup product and the map induced from the $\rho$-invariant pairing
$$
\begin{array}{rcl}
    \mathbf C^2 \otimes  \mathbf C^2  & \to & \mathbf C \\
       \begin{pmatrix}
         a \\ b
       \end{pmatrix}
	\otimes
\begin{pmatrix}
         c \\ d
       \end{pmatrix}
& 
\mapsto
&
\det
      \begin{pmatrix}
       a & c \\
       b & d
      \end{pmatrix}
\end{array}.
$$
This pairing  is bilinear, nondegenerate (Poincar\'e duality) and natural. In addition, when $n=2$ and $k=1$ it is symmetric (because both the usual cup product and the determinant are antisymmetric).
In particular, we have two symmetric pairings, for $k=1,2$:
\begin{equation}
 \label{eqn:pairingmk}
\cup_k: H^1(\partial M_k;\rho_k)\times H^1(\partial M_k; \rho_k)\to H^2(\partial M_k;\mathbf C)\cong \mathbf C.
\end{equation}

The following lemma assumes that $\operatorname{trace}(\rho(\mu))=-2$, as the whole section, though it only requires $\operatorname{trace}(\rho(\mu))\neq 2$.

\begin{lemma}
\label{lemma:Spartial} $ H^*(\partial S;\rho_0)=0$.
\end{lemma}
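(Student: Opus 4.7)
The plan is to exploit the fact that each component of $\partial S$ is a meridian curve on $\partial M$, and then to compute the cohomology of a circle with twisted coefficients directly.

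First I would observe that $\partial S$ consists of four disjoint circles, each lying on the boundary torus $\partial M = \partial \mathcal{N}(K)$. Since $C$ meets $K$ transversally in four points, each boundary component of $S$ bounds a small meridional disk in $\mathcal{N}(K)$ around one of these four intersection points. Consequently, each component of $\partial S$ represents a conjugate of the meridian $\mu$ in $\pi_1(M)$. Thus for any component $c \subset \partial S$, the restriction of $\rho_0$ sends a generator of $\pi_1(c) \cong \mathbf{Z}$ to a matrix conjugate to $\rho(\mu)$, and in particular with the same trace.

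Next I would reduce to the following observation: for a circle with $\pi_1 \cong \mathbf{Z}$ and a representation sending a generator to $A \in \operatorname{SL}(2,\mathbf{C})$, one has
$$
H^0(S^1;A) \cong \ker(A - \operatorname{Id}), \qquad H^1(S^1;A) \cong \operatorname{coker}(A-\operatorname{Id}),
$$
so both groups vanish if and only if $1$ is not an eigenvalue of $A$. Since $\det A = 1$, this is equivalent to $\operatorname{trace}(A) \neq 2$.

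Finally, under the standing hypothesis $\operatorname{trace}(\rho(\mu)) = -2$, the matrix $\rho(\mu)$ has both eigenvalues equal to $-1$, so $1$ is not an eigenvalue. Applying the previous observation to each of the four circles in $\partial S$ (each carrying a conjugate of $\rho(\mu)$, hence the same trace), we conclude that $H^*(c;\rho_0|_c) = 0$ for every component. Taking the direct sum over the four components gives $H^*(\partial S;\rho_0) = 0$.

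There is essentially no obstacle here; the only thing to verify carefully is the identification of the boundary components of $S$ with meridians of $K$, which is immediate from the definition of a Conway sphere. I note that the argument only uses $\operatorname{trace}(\rho(\mu)) \neq 2$, as remarked in the statement.
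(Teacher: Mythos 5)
Your proof is correct and follows essentially the same route as the paper: the key point in both is that each component of $\partial S$ carries a conjugate of $\rho(\mu)$, whose trace being different from $2$ forces $\ker(\rho(\mu)-\operatorname{Id})=0$, hence $H^0=0$. The only cosmetic difference is that you compute $H^1(S^1;\rho_0)$ explicitly as $\operatorname{coker}(\rho(\mu)-\operatorname{Id})$, whereas the paper deduces its vanishing from that of $H^0$ by duality; these amount to the same linear-algebra fact.
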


\begin{proof}
 Since $\operatorname{trace}(\rho(\mu))=-2$, $\rho_0(\mu)$ has no nontrivial fixed vectors in $\mathbf C^2$. Thus $H^0(\partial S;\rho_0)=0$ and, by duality,
 $ H^*(\partial S;\rho_0)=0$.
\end{proof}

\begin{lemma}
\label{lemma:SpartialMk}
For $k=1,2$, the inclusion map induces an isomorphism 
\begin{equation}
\label{eqn:isoS} H^1(S;\rho_0)  \cong  H^1(\partial M_k; \rho_k).
\end{equation}
\end{lemma}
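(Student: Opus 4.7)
The plan is to decompose the closed surface $\partial M_k$ as $S \cup P_k$, where $P_k = \partial M_k \cap \partial M$ is the boundary of the tubular neighborhood of the two arcs of the tangle $B_k \cap K$. Thus $P_k$ is a disjoint union of two annuli, and $S \cap P_k = \partial S$.

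First, I would write down the Mayer-Vietoris sequence with $\rho_k$-twisted coefficients for this decomposition:
\begin{equation*}
\cdots \to H^{i-1}(\partial S;\rho_0) \to H^i(\partial M_k;\rho_k) \to H^i(S;\rho_0) \oplus H^i(P_k;\rho_k) \to H^i(\partial S;\rho_0) \to \cdots
\end{equation*}
By Lemma~\ref{lemma:Spartial}, every $H^\ast(\partial S;\rho_0)$ vanishes, so the sequence degenerates into isomorphisms
\begin{equation*}
H^i(\partial M_k;\rho_k) \;\cong\; H^i(S;\rho_0) \oplus H^i(P_k;\rho_k),
\end{equation*}
and under these, the projection to the first summand is exactly the restriction map induced by the inclusion $S \hookrightarrow \partial M_k$.

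Next, I would show that $H^\ast(P_k;\rho_k)=0$. Since $P_k$ is a disjoint union of two annuli, each is homotopy equivalent to a circle whose fundamental group is generated by a meridian $\mu$ of $K$. The hypothesis $\operatorname{trace}(\rho(\mu))=-2$ forces $\rho(\mu)$ to have both eigenvalues equal to $-1$, so $\rho(\mu)-\Id$ has both eigenvalues equal to $-2$ and is therefore invertible. Consequently both $H^0$ (kernel) and $H^1$ (cokernel) of each circle component vanish, which gives $H^\ast(P_k;\rho_k)=0$.

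Combining these two observations yields the stated isomorphism $H^1(S;\rho_0)\cong H^1(\partial M_k;\rho_k)$ induced by the inclusion. I do not expect any serious obstacle here: the only subtlety is matching the fundamental-group generator of each annulus component of $P_k$ with a conjugate of the meridian $\mu$, which is immediate from the fact that $P_k$ sits inside $\partial M$ as two meridional tori truncated by $\partial S$.
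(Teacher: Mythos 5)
Your proof is correct and is essentially the paper's argument: the authors also decompose $\partial M_k$ into $S$ and the two annuli $\overline{\partial M_k\setminus S}$, observe that the annuli (and their intersection $\partial S$ with $S$) have vanishing twisted cohomology because each is homotopy equivalent to a meridian circle on which $\rho(\mu)-\Id$ is invertible (Lemma~\ref{lemma:Spartial}), and conclude via Mayer--Vietoris. The only difference is that you recompute the vanishing for the annuli directly rather than citing Lemma~\ref{lemma:Spartial}, which is the same calculation.
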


\begin{proof}
This  follows from the  Mayer-Vietoris sequence applied to $S$ and $\overline{\partial M_k\setminus S}$,
which  is the union of the two annuli 
 around the arcs of $K\cap B_k$ that have the homotopy type of a component of $\partial S$. Hence by Lemma~\ref{lemma:Spartial},  
$H^*(\partial M_k\setminus S;\rho_k) =0$.
\end{proof}

Pulling back the pairings (\ref{eqn:pairingmk}) by the 
isomorphism (\ref{eqn:isoS}), we obtain two symmetric perfect pairings
\begin{equation}
 \label{eqn:SS}
\cup_k':H^1(S;\rho_0)\times H^1(S;\rho_0)\to  \mathbf C.
\end{equation}

\begin{lemma}
\label{lemma:cup1cup2} 
Both pairings are the same: $\cup_1'=\cup_2'$.
\end{lemma}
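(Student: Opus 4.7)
The plan is to show that both $\cup_1'$ and $\cup_2'$ coincide with a single pairing $\cup_S$ defined intrinsically from $(S,\partial S, \rho_0)$ alone; then their equality will be automatic.

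By Lemma~\ref{lemma:Spartial} and the long exact sequence of $(S, \partial S)$, the natural map $j : H^1(S, \partial S; \rho_0) \to H^1(S; \rho_0)$ is an isomorphism. Poincar\'e--Lefschetz duality on the oriented surface $S$ provides a perfect pairing
$$
H^1(S; \rho_0) \otimes H^1(S, \partial S; \rho_0) \to H^2(S, \partial S; \mathbf C) \cong \mathbf C,
$$
and precomposing with $j^{-1}$ in the second slot yields a pairing
$$
\cup_S(\alpha, \beta) = \alpha \cup j^{-1}(\beta) \in \mathbf C
$$
on $H^1(S;\rho_0)$ that makes no reference to $M_1$ or $M_2$.

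To identify each $\cup_k'$ with $\cup_S$, I would decompose $\partial M_k = S \cup A_k$ with $A_k = \overline{\partial M_k \setminus S}$, a union of two annuli that retract onto $\partial S$. Lemma~\ref{lemma:Spartial} gives $H^*(A_k; \rho_k) = 0$, so the natural map $H^1(\partial M_k, A_k; \rho_k) \to H^1(\partial M_k; \rho_k)$ is an isomorphism; excision further identifies $H^1(\partial M_k, A_k; \rho_k) \cong H^1(S, \partial S; \rho_0)$. These identifications fit into a commutative square with the restriction-to-$S$ map of Lemma~\ref{lemma:SpartialMk}, so the unique relative lift of a class $\alpha \in H^1(\partial M_k; \rho_k)$ corresponds under excision to $j^{-1}(\alpha|_S) \in H^1(S, \partial S; \rho_0)$. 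Naturality of the cup product and compatibility of the fundamental classes $[\partial M_k]$ and $[S, \partial S]$ under excision then show that the Poincar\'e pairing on $H^1(\partial M_k; \rho_k)$ corresponds to $\cup_S$ under the restriction isomorphism, which is exactly $\cup_k'$. Hence $\cup_1' = \cup_S = \cup_2'$.

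The main technical point is the orientation bookkeeping: with the outward-normal convention, $\partial M_1$ and $\partial M_2$ induce opposite orientations on $S$, so literal equality (as opposed to equality up to sign) requires a careful choice of orientations of $\partial M_k$ compatible with a fixed orientation of $S$. Fortunately, even a sign ambiguity would be harmless for the intended application to the isotropy condition of Proposition~\ref{prop:criterion}.
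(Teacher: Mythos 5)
Your proof is correct and is essentially the paper's argument translated from de Rham language into relative cohomology: the paper likewise uses $H^*(\partial S;\rho_0)=0$ to push each class onto $S$ (via representatives compactly supported in the interior of $S$, which is exactly the de Rham avatar of your lift to $H^1(S,\partial S;\rho_0)$) so that the pairing is computed on $S$ alone and hence cannot depend on $k$. Your closing remark about orientations is a genuine subtlety the paper glosses over, and you are right that it is harmless here, since an overall sign changes neither the quadric nor the isotropy condition used later.
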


\begin{proof}
We use de Rham cohomology for the proof.
We first claim that every cohomolgy class in $H^1(S;\rho_0)$ is represented by a form compactly supported in the interior of $S$, using that  $H^1(\partial S;\rho_0)=0$.
Namely, let $\omega\in \Omega^1(S;\rho_0)$ be any closed form and $U\subset S$ a tubular neighborhood of $\partial S$. Since $H^1(\partial S;\rho_0)=0$,
the restriction of $\omega$ to $ U$ is exact: there exists a section $s\in \Omega^0(U;\rho_0)$ satisfying
$$
\omega\vert_U=d s,
$$
where $d$ is the differential. Consider a smooth function $\varphi: U\to [0,1]$ that extends smoothly to zero on $S\setminus U$ and equals one in a smaller neighborhood of
$\partial S$. Then the form
$$
\omega- d(\varphi s)
$$ 
is cohomologous to $\omega$ and is supported on a compact subset of the interior of $S$.

Given two closed forms $\omega_1,\omega_2\in \Omega^1(S;\rho_0)$ with compact support in the interior of $S$, we may view them as smooth forms 
 $ \tilde \omega_1,\tilde \omega_2$
on $\partial M_k$ by extending them trivially. As the isomorphism $  H^1(\partial M_k; \rho_k)\cong H^1(S;\rho_0) $ is induced by restriction, it maps 
the cohomology classes $[\tilde \omega_1], [\tilde \omega_2]\in  H^1(\partial M_k; \rho_k)$   to $[\omega_1], [\omega_2]\in H^1(S;\rho_0)$.
 Thus
the cup product $\cup_k'$ of their cohomology classes is
$$
[\omega_1]\cup_k' [\omega_2]= [\tilde \omega_1]\cup_k [\tilde \omega_2]= \int_{\partial M_k} \det (\tilde\omega_1\wedge\tilde \omega_2),
$$
where $\det (\tilde\omega_1\wedge \tilde\omega_2)$ denotes the pairing associated to the determinant evaluated at the exterior product of the forms.
By construction, the support of $\tilde\omega_1$ and $\tilde\omega_2$ is contained in $S$, thus the previous integral can be evaluated on $S$ instead of $\partial M_k$,
and in particular it does not depend on $k$.
\end{proof}

By the previous lemma, we may omit the subindex and just write 
$$
\cup= \cup_1'=\cup_2'.
$$

\begin{lemma}
\label{lem:isotropic}
The image of $i_2^*: H^1(M_2;\rho_2)\to H^1(S;\rho_0)$ is an isotropic plane for the product (\ref{eqn:SS}). 
\end{lemma}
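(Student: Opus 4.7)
The plan is to use de Rham cohomology together with Stokes' theorem, exploiting the fact that the determinant pairing $\det\colon \mathbf C^2\otimes\mathbf C^2\to\mathbf C$ is $\operatorname{SL}(2,\mathbf C)$-invariant. This is the same framework already in play in Lemma~\ref{lemma:cup1cup2}.

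First, by Lemma~\ref{lemma:SpartialMk}, the restriction $r\colon H^1(\partial M_2;\rho_2)\to H^1(S;\rho_0)$ is an isomorphism, and by construction of the pairing $\cup=\cup_2'$ in (\ref{eqn:SS}) (together with Lemma~\ref{lemma:cup1cup2}), this isomorphism intertwines $\cup_2$ on $H^1(\partial M_2;\rho_2)$ with $\cup$ on $H^1(S;\rho_0)$. Hence it suffices to prove that the image of the restriction map $H^1(M_2;\rho_2)\to H^1(\partial M_2;\rho_2)$ is isotropic for $\cup_2$.

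Represent cohomology classes $[\alpha],[\beta]\in H^1(M_2;\rho_2)$ by closed twisted 1-forms $\alpha,\beta\in\Omega^1(M_2;\rho_2)$. Because $\det$ is $\operatorname{SL}(2,\mathbf C)$-invariant, wedging against $\det$ descends to a scalar-valued 2-form $\det(\alpha\wedge\beta)\in\Omega^2(M_2;\mathbf C)$ satisfying the Leibniz rule
$$
d\bigl(\det(\alpha\wedge\beta)\bigr)=\det(d\alpha\wedge\beta)-\det(\alpha\wedge d\beta)=0,
$$
since both $\alpha$ and $\beta$ are closed with respect to the twisted differential. Applying Stokes' theorem to the compact oriented 3-manifold $M_2$ yields
$$
\int_{\partial M_2}\det\bigl(\alpha|_{\partial M_2}\wedge\beta|_{\partial M_2}\bigr)=\int_{M_2}d\bigl(\det(\alpha\wedge\beta)\bigr)=0.
$$
By definition of $\cup_2$ in (\ref{eqn:pairingmk}), the left-hand side is exactly the value of the cup product of the restrictions of $[\alpha]$ and $[\beta]$ on the closed oriented surface $\partial M_2$, so the image of $H^1(M_2;\rho_2)$ in $H^1(\partial M_2;\rho_2)$ is isotropic.

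Finally, the image is indeed a plane: Lemma~\ref{lem:cohomM_i} gives $\dim_{\mathbf C} H^1(M_2;\rho_2)=2$, and $i_2^*$ is injective by the Mayer-Vietoris isomorphism (\ref{eqn:mv}). I do not expect any serious obstacle in this proof; the only point that requires care is the Leibniz identity for the twisted wedge-determinant product, which is immediate from the $\operatorname{SL}(2,\mathbf C)$-invariance of $\det$ and has, in effect, already been used in Lemma~\ref{lemma:cup1cup2}.
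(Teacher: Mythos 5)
Your proof is correct, but it takes a different route from the paper's. The paper establishes isotropy of the image of $j_2^*\colon H^1(M_2;\rho_2)\to H^1(\partial M_2;\rho_2)$ purely algebraically: it invokes the long exact sequence of the pair $(M_2,\partial M_2)$ together with the Poincar\'e--Lefschetz duality adjunction $j_2^*(a)\cup b=a\cup\Delta(b)$ (the commutative diagram~(\ref{eqn:diaramPD})), so that $j_2^*(a)\cup j_2^*(b)=a\cup\Delta(j_2^*(b))=0$ because $\Delta\circ j_2^*=0$ by exactness. You instead prove the same ``half lives, half dies''--type statement analytically, via de Rham representatives and Stokes' theorem on the compact oriented $3$-manifold $M_2$; your Leibniz identity $d(\det(\alpha\wedge\beta))=\det(d\alpha\wedge\beta)-\det(\alpha\wedge d\beta)$ is valid because $\det$ is a flat (i.e.\ $\rho$-invariant, hence parallel) pairing on the flat bundle, and the sign is right for a $1$-form. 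Your reduction step --- that the isomorphism of Lemma~\ref{lemma:SpartialMk} intertwines $\cup_2$ with the pairing~(\ref{eqn:SS}) by the very definition of $\cup_2'$ --- and your verification that the image is a plane (from Lemma~\ref{lem:cohomM_i} plus injectivity of $i_2^*$ via~(\ref{eqn:mv})) both match what the paper needs. The trade-off is minor: the Stokes argument is more elementary and self-contained and meshes well with the de Rham computation already used in Lemma~\ref{lemma:cup1cup2}, while the paper's duality-diagram argument is purely topological and makes the exactness mechanism ($\Delta\circ j_2^*=0$) explicit; both are standard proofs of the same well-known fact.
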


\begin{proof}
The fact that the image is a plane follows from Lemma~\ref{lem:cohomM_i} and its proof.
Then, by construction the lemma is equivalent to saying that 
the image of $H^1(M_k;\rho_2)\to H^1(\partial M_k;\rho_k)$ is an isotropic subspace. This is well known \cite{Hodgson, Sikora}, but we sketch the argument for completeness.

Using the long exact sequence of the pair $(M_k,\partial M_k)$, we have a  
 commutative diagram with exact rows:
 \begin{equation}
\label{eqn:diaramPD}
\xymatrix{ 
H^1(M_k;\rho_k) \ar  [r]^{j_k^*} \ar@{}[d]^*++{\!\!\!\!\times}  & H^1(\partial M_k;\rho_k) \ar[r]^{\Delta}  \ar@{}[d]^*++{\!\!\!\!\times} & H^2(M_k,\partial M_k;\rho_k)  \ar@{}[d]^*++{\!\!\!\!\times}  \\
 H^2(M_k,\partial M_k;\rho_k) \ar[d] &   H^1(\partial M_k;\rho_k) \ar[l]_{\Delta} \ar[d] & H^1(M_k;\rho_k) \ar  [l]_{j_k^*}  \ar[d]\\
\mathbf C & \mathbf C & \mathbf C 
} 
\end{equation}
where the columns denote the pairings. 
This implies 
$$
j_k^*(a)\cup b= a\cup \Delta (b),\qquad \forall a\in H^1(M_k;\rho_k)\textrm{ and } b\in H^1(\partial M_k;\rho_k).
$$
Hence
$$
j_k^*(a)\cup j_k^*(b)= a\cup \Delta (j_k^*(b))=0,\qquad \forall a,b\in H^1(M_k;\rho_k),
$$
because $\Delta\circ j_k^*=0$, and we are done.
\end{proof}

\subsection{Finding isotropic planes with the ruled quadric}
\label{subsec:quadric}

Let $\mathbf P ^3$ denote the projective space on $H^1(S;\rho_0)\cong \mathbf C^4$. Isotropic planes of  $H^1(S;\rho_0)$ are in bijection with projective lines in the quadric
$$
Q=\{ x\in\mathbf P ^3\mid x\cup x=0\}.
$$
Since $\cup$ is a nondegenrate paring, $Q$ is the standard quadric, which is a ruled surface, with two rulings. We recall next its basic properties.

\begin{proposition}
\label{prop:ruling} 
There are two disjoint families of projective lines $\mathcal L_+$ and $\mathcal L_-$ in $Q$ such that:
\begin{itemize} 
\item[(i)]	 Every line in $Q$ belongs to either $\mathcal L_+$ or $\mathcal L_-$.
\item[(ii)]	 Every point in $Q$ belongs to precisely one line in $\mathcal L_+$ and one in  $\mathcal L_-$. 
\item[(iii)]	 Two lines in $Q$ intersect if, and only if, one is in $\mathcal L_+$ and the other one is in $\mathcal L_-$.
\item[(iv)]	 Embedding those spaces of lines in the projective Grassmannian, $\mathcal L_+\cong \mathcal L_-\cong\mathbf P^1$.
\end{itemize}
\end{proposition}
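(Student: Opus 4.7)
The plan is to identify $Q$ with the Segre image of $\mathbf P^1 \times \mathbf P^1$ in $\mathbf P^3$ and read off the two rulings from the product structure; this is classical. Since $\cup$ is a nondegenerate symmetric bilinear form on a $4$-dimensional complex vector space, after a linear change of basis I may assume $Q$ is cut out by $x_0 x_3 - x_1 x_2 = 0$. The Segre map
$$
\sigma : \mathbf P^1 \times \mathbf P^1 \to \mathbf P^3, \qquad ([a:b],[c:d]) \mapsto [ac : ad : bc : bd],
$$
is then a bijection onto $Q$.

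Next I define $\mathcal L_+$ to consist of the lines $\sigma(\{p\} \times \mathbf P^1)$ for $p \in \mathbf P^1$, and $\mathcal L_-$ of the lines $\sigma(\mathbf P^1 \times \{q\})$. Each such image is visibly a projective line contained in $Q$, and each family is naturally parameterized by $\mathbf P^1$, giving (iv). Through every point $\sigma(p, q) \in Q$ passes exactly one line from each ruling, which gives (ii). Two lines in $\mathcal L_+$ come from disjoint fibers in $\mathbf P^1 \times \mathbf P^1$ and so are disjoint in $Q$, while $\sigma(\{p\} \times \mathbf P^1)$ and $\sigma(\mathbf P^1 \times \{q\})$ meet only at $\sigma(p, q)$, proving (iii).

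The one nontrivial point is (i): every projective line $L \subset Q$ must belong to $\mathcal L_+ \cup \mathcal L_-$. Fix $p \in L$. Since $Q$ is smooth and $L \subset Q$ is tangent to $Q$ at $p$, $L$ lies in the projective tangent hyperplane $T_p Q \subset \mathbf P^3$, hence $L \subset Q \cap T_p Q$. The intersection $Q \cap T_p Q$ is a conic in the $\mathbf P^2$ given by $T_p Q$ with a singularity at $p$; such a conic is degenerate and, by a direct computation in the normalized coordinates, splits as the union of two distinct lines through $p$, which one recognizes as the members of $\mathcal L_+$ and $\mathcal L_-$ passing through $p$. Thus $L$ coincides with one of them. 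This tangent-hyperplane step is the main (really, only) subtlety; everything else is bookkeeping in the Segre parametrization.
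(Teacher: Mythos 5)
Your proof is correct. Note that the paper itself offers no proof of this proposition: it simply remarks that, since $\cup$ is nondegenerate, $Q$ is the standard smooth quadric, and then records its classical properties (only the subsequent Proposition on the $\operatorname{PSL}(2,\mathbf C)\times\operatorname{PSL}(2,\mathbf C)$ action is referred to Fulton--Harris). Your Segre-embedding argument is the standard way to make this self-contained, and all four items are handled adequately: the reduction to $x_0x_3-x_1x_2=0$ is legitimate because all nondegenerate quadratic forms on $\mathbf C^4$ are equivalent, and the parametrization by fibers gives (ii), (iii) for lines in the rulings, and (iv) directly. The genuinely nontrivial point, (i), is exactly where you put the weight. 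Two small things are worth making explicit there: first, that the singular conic $Q\cap T_pQ$ is a pair of \emph{distinct} lines rather than a double line --- this follows from nondegeneracy of $\cup$, since the radical of the restriction of the form to the hyperplane $p^\perp$ is exactly the line $\langle p\rangle$, so the induced form on $p^\perp/\langle p\rangle\cong\mathbf C^2$ is nondegenerate and vanishes on two distinct directions; second, that your ``direct computation in normalized coordinates'' at one point suffices for all points because $\operatorname{O}(4,\mathbf C)$ (or $\operatorname{PGL}(2)\times\operatorname{PGL}(2)$ via the Segre parametrization) acts transitively on $Q$. With those two remarks spelled out, the argument is complete.
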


We shall also use the action of $\operatorname{SO}(4,\mathbf C)$, the isometry group of $H^1(S;\rho_0)$ equipped with the quadratic form $\cup$
in (\ref{eqn:SS}). Recall the isomorphism:
$$
\operatorname{SL}(2,\mathbf C)\times \operatorname{SL}(2,\mathbf C)/\pm (\Id,\Id)\cong \operatorname{SO}(4,\mathbf C).
$$
After projectivizing, this induces an isomorphism:
$$
\operatorname{PSL}(2,\mathbf C)\times \operatorname{PSL}(2,\mathbf C) \cong \operatorname{PSO}(4,\mathbf C).
$$

\begin{proposition}
\label{prop:product}
The action of $\operatorname{PSL}(2,\mathbf C)\times \operatorname{PSL}(2,\mathbf C)$ is equivalent to the 
product action on $\mathcal L_+\times \mathcal L_-\cong \mathbf P^1\times\mathbf P^1$ (by an equivalence that preserves the product).  
\end{proposition}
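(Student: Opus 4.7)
The plan is to realize everything explicitly via a tensor product model. Let $V=\mathbf C^2$ equipped with the symplectic form $\omega=\det$. Endow $V\otimes V\cong\mathbf C^4$ with the bilinear form determined on decomposable tensors by
\[
B(v\otimes w,\,v'\otimes w')=\omega(v,v')\,\omega(w,w');
\]
since $\omega$ is alternating, $B$ is symmetric, and nondegeneracy is immediate. As a nondegenerate complex symmetric form on $\mathbf C^4$ is unique up to isometry, I would fix an isometry identifying $(H^1(S;\rho_0),\cup)$ with $(V\otimes V,B)$.

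Next I would identify $Q$ with the Segre variety. The Segre embedding
\[
\sigma:\mathbf P(V)\times\mathbf P(V)\longrightarrow \mathbf P(V\otimes V),\qquad ([v],[w])\mapsto [v\otimes w],
\]
lands in $Q$ because $B(v\otimes w,v\otimes w)=\omega(v,v)\omega(w,w)=0$. Its image is a closed irreducible surface inside the irreducible surface $Q$, so the two coincide. The two tautological families of lines $\{[v]\}\times\mathbf P(V)$ and $\mathbf P(V)\times\{[w]\}$ then embed as disjoint families of projective lines in $Q$, which by Proposition~\ref{prop:ruling} must be $\mathcal L_+$ and $\mathcal L_-$ up to relabeling. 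This gives canonical identifications $\mathcal L_\pm\cong \mathbf P(V)\cong\mathbf P^1$, and matches the product structure $\mathcal L_+\times\mathcal L_-\cong\mathbf P^1\times\mathbf P^1$ with $\mathbf P(V)\times\mathbf P(V)$ via $\sigma$.

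For the action, the diagonal action $(A,B)\cdot(v\otimes w)=Av\otimes Bw$ of $\operatorname{SL}(V)\times\operatorname{SL}(V)$ on $V\otimes V$ visibly preserves $B$, and so determines a homomorphism to $\operatorname{SO}(4,\mathbf C)$. Its kernel is easily seen to be $\{\pm(\Id,\Id)\}$; since both sides are connected complex Lie groups of dimension $6$, this homomorphism is an isomorphism onto, recovering the isomorphism stated in Section~\ref{subsec:quadric}. Translated through $\sigma$, the action of $(A,B)$ sends the line indexed by $[v]$ to the one indexed by $[Av]$ in $\mathcal L_+$, and the line indexed by $[w]$ to the one indexed by $[Bw]$ in $\mathcal L_-$. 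This is precisely the product of the two standard $\operatorname{PSL}(2,\mathbf C)$-actions on $\mathbf P^1$, which is the desired equivalence preserving the product.

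The step requiring the most care is matching the abstract algebraic isomorphism $\operatorname{PSL}(2,\mathbf C)\times\operatorname{PSL}(2,\mathbf C)\cong\operatorname{PSO}(4,\mathbf C)$ with the concrete tensor product representation so that the two rulings are correctly labelled; but since the outer automorphism of $\operatorname{PSL}(2,\mathbf C)\times\operatorname{PSL}(2,\mathbf C)$ that swaps the factors corresponds exactly to the isometry of $(V\otimes V,B)$ that exchanges the two rulings of $Q$, the proposition is insensitive to this labeling choice.
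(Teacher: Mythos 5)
Your argument is correct and complete: the identification of $(H^1(S;\rho_0),\cup)$ with $(\mathbf C^2\otimes\mathbf C^2,\det\otimes\det)$, the Segre description of $Q$ and its two rulings, and the equivariance computation $(A,B)\cdot(v\otimes w)=Av\otimes Bw$ together establish exactly the asserted equivalence. The paper gives no proof of its own here, only the citation to Fulton--Harris, and what you have written is precisely the standard argument that reference supplies, so there is nothing further to reconcile.
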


See \cite{FultonHarris} for a proof.
It follows from this proposition that $\operatorname{PSL}(2,\mathbf C)\times \{\Id\}$ acts trivially on $\mathcal L_-$, and 
 $ \{\Id\}\times \operatorname{PSL}(2,\mathbf C)$ acts trivially on $\mathcal L_+$.

Consider the involutions $\tau_1$, $\tau_2$ and $\tau_3$ of the Conway sphere $C$, as in Figure~\ref{fig:involutions}.
For $i=1,2,3$  let $a_i\in\operatorname{SL}(2,\mathbf C)$, satisfy $\rho_0\circ\tau_{i*}= \rho_0^{a_i}$,
 and define
$\tau_{a_i}^*=a_{i*}\circ \tau^*_i$ the corresponding actions on 
$H^1(S;\rho_0)$.

\begin{lemma}
The induced maps  $\tau_{a_i}^*$ on $\mathbf P^3$ lie in one factor
$\operatorname{PSL}(2,\mathbf C)\times \{\Id\}$ or  $ \{\Id\}\times \operatorname{PSL}(2,\mathbf C)$.
In addition all 
$\tau_{a_1}^*$, $\tau_{a_2}^*$ and $\tau_{a_3}^*$
lie in the same factor.
\end{lemma}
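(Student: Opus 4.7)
The plan is to analyze the projective action of $\tau_{a_i}^*$ on $\mathbf P^3$ through the double cover $\operatorname{SL}(2,\mathbf C)\times \operatorname{SL}(2,\mathbf C)\to \operatorname{PSO}(4,\mathbf C)$ of Proposition~\ref{prop:product}, comparing the eigenvalues of $\tau_{a_i}^*$ with those of tensor products.

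First I will show that $\tau_{a_i}^*$ is an isometry of the pairing $\cup$ in (\ref{eqn:SS}). The coefficient conjugation $a_{i*}$ preserves the determinant pairing $\mathbf C^2\otimes \mathbf C^2\to \mathbf C$ since $a_i\in \operatorname{SL}(2,\mathbf C)$, and $\tau_i^*$ is pullback by an orientation-preserving involution of $(S,\partial S)$, so it preserves Poincar\'e--Lefschetz duality. Using (\ref{eqn:tau2=-id}) I decompose $H^1(S;\rho_0)=V_+\oplus V_-$ into the $\pm i$-eigenspaces of $\tau_{a_i}^*$. The isometry property forces each $V_\pm$ to be isotropic: for $v,w\in V_+$ one has $v\cup w=(iv)\cup(iw)=-v\cup w$. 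Nondegeneracy of $\cup$ then forces $\dim V_+=\dim V_-=2$, so $\det(\tau_{a_i}^*)=i^2(-i)^2=1$, and $\tau_{a_i}^*\in\operatorname{PSO}(4,\mathbf C)$ preserves each ruling $\mathcal L_\pm$ by Proposition~\ref{prop:product}.

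Next I lift $\tau_{a_i}^*$ to a pair $(A,B)\in\operatorname{SL}(2,\mathbf C)\times\operatorname{SL}(2,\mathbf C)$ acting as $A\otimes B$ on $\mathbf C^2\otimes \mathbf C^2\cong\mathbf C^4$. If the eigenvalues of $A$ and $B$ are $\alpha^{\pm 1}$ and $\beta^{\pm 1}$, those of $A\otimes B$ form the multiset $\{\alpha\beta,\alpha\beta^{-1},\alpha^{-1}\beta,\alpha^{-1}\beta^{-1}\}$. Equating this with $\{i,i,-i,-i\}$, from the previous step, forces either $\alpha=\pm 1$ or $\beta=\pm 1$, so one of $A,B$ equals $\pm\operatorname{Id}$. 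The projective class of $\tau_{a_i}^*$ therefore lies in $\operatorname{PSL}(2,\mathbf C)\times\{\operatorname{Id}\}$ or in $\{\operatorname{Id}\}\times\operatorname{PSL}(2,\mathbf C)$.

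For the final claim, the three involutions generate a Klein four-group on the four-punctured sphere, so $\tau_3$ is isotopic to $\tau_1\tau_2$ relative to the punctures; irreducibility of $\rho_0$ then gives $a_3=\pm a_1a_2$, and commutation of $a_{i*}$ with $\tau_j^*$ for $i\neq j$ (routine, since $a_{i*}$ acts on coefficients while $\tau_j^*$ acts on chains) yields $\tau_{a_3}^*=\pm\tau_{a_1}^*\tau_{a_2}^*$. Now if two of the $\tau_{a_i}^*$ sat in opposite factors, say $\tau_{a_1}^*=(A,\operatorname{Id})$ and $\tau_{a_2}^*=(\operatorname{Id},B)$ with $A,B$ both non-trivial, then $\tau_{a_1}^*\tau_{a_2}^*=(A,B)$ would belong to neither factor, contradicting the fact that $\tau_{a_3}^*$ does. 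Degenerate cases where some $\tau_{a_i}^*$ is trivial are immediate, since the trivial element lies in both factors. The main obstacle I anticipate is the first step, namely proving $\dim V_+=\dim V_-=2$: this is what places $\tau_{a_i}^*$ in $\operatorname{PSO}(4,\mathbf C)$ rather than in the coset that swaps the rulings, and is the gateway to invoking the tensor product structure of Proposition~\ref{prop:product}.
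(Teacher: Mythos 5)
Your proof is correct and shares the paper's overall skeleton (use $(\tau_{a_i}^*)^2=-\operatorname{Id}$ to place the projectivized map in $\operatorname{PSL}(2,\mathbf C)\times\operatorname{PSL}(2,\mathbf C)$, kill one factor, then finish with $\tau_{a_3}^*=\pm\tau_{a_1}^*\tau_{a_2}^*$), but the middle step is genuinely different. The paper argues group-theoretically: an involution of $\operatorname{PSL}(2,\mathbf C)\times\operatorname{PSL}(2,\mathbf C)$ that is nontrivial on both factors lifts to an element of $\operatorname{SL}(2,\mathbf C)\times\operatorname{SL}(2,\mathbf C)$ squaring to $-(\operatorname{Id},\operatorname{Id})$, hence to an element of $\operatorname{SO}(4,\mathbf C)$ squaring to $+\operatorname{Id}$, contradicting $(\tau_{a_i}^*)^2=-\operatorname{Id}$. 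You instead write the lift as $A\otimes B$ and match the eigenvalue multiset $\{\alpha\beta,\alpha\beta^{-1},\alpha^{-1}\beta,\alpha^{-1}\beta^{-1}\}$ against $\{i,i,-i,-i\}$, forcing one factor to be $\pm\operatorname{Id}$; this is a clean linear-algebra computation (with the small unstated point that $A$ and $B$ are individually diagonalizable, which follows since $A\otimes B$ is). Your approach buys something the paper leaves implicit: by showing the $\pm i$-eigenspaces are isotropic and hence both $2$-dimensional, you get $\det(\tau_{a_i}^*)=1$, which is what puts the map in $\operatorname{SO}(4,\mathbf C)$ rather than the coset of $\operatorname{O}(4,\mathbf C)$ that swaps the two rulings --- the paper jumps from ``preserves $\cup$'' straight to $\operatorname{PSL}(2,\mathbf C)\times\operatorname{PSL}(2,\mathbf C)$ without this check. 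Your treatment of the last assertion (deriving $a_3=\pm a_1a_2$ from irreducibility of $\rho_0$ and then excluding the mixed case) is also more detailed than the paper's one-line appeal to $\tau_{a_1}^*\tau_{a_2}^*=\pm\tau_{a_3}^*$, and is sound, modulo the standard fact that the inner automorphism ambiguity coming from ``isotopic rel punctures'' acts trivially on twisted cohomology.
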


\begin{proof}
By looking at the action on  $H^1(S;\rho_0)\cong \mathbf C^4$, we have $(\tau_{a_i}^*)^2=(a_{i*})^2\circ (\tau^*_i)^2=-\Id$, by (\ref{eqn:tau2=-id}).
This implies that $\tau_{a_i}^*$ projects to an involution of $\mathbf P^3$ preserving $\cup$, 
hence to an involution in
$\operatorname{PSL}(2,\mathbf C)\times \operatorname{PSL}(2,\mathbf C)$. 
Notice that if an involution of $\operatorname{PSL}(2,\mathbf C)\times \operatorname{PSL}(2,\mathbf C)$ 
is nontrivial on each factor, it lifts to an element of  $\operatorname{SL}(2,\mathbf C)\times \operatorname{SL}(2,\mathbf C)$
whose square is $-(\Id,\Id)$, hence to an involution of $\operatorname{SO}(4,\mathbf C)\cong \operatorname{SL}(2,\mathbf C)\times \operatorname{SL}(2,\mathbf C)/\pm (\Id,\Id)$.
 As $(\tau_{a_i}^*)^2=-\operatorname{ Id}\in \operatorname{SO}(4,\mathbf C)$, 
we deduce that each $\tau_{a_i}^*$ projects to an involution of one of the factors
of $\operatorname{PSL}(2,\mathbf C)\times \operatorname{PSL}(2,\mathbf C)$,
and is trivial on the other factor. This proves the first assertion of the lemma.
For the last assertion, just use that $\tau_{a_1}^* \tau_{a_2}^*= \pm \tau_{a_3}^*$.
\end{proof}

Hence, up to permuting $\mathcal L_-$ and $\mathcal L_+$, we get:

\begin{corollary}
 \label{coro:tauasigmab}  
\begin{itemize}
 \item[(i)] $\tau_{a_1}^*$, $\tau_{a_2}^*$ and $\tau_{a_3}^*$ act trivially on $\mathcal L_-$.
\item[(ii)]  There is no point in $\mathcal L_+$ fixed by all  $\tau_{a_1}^*$, $\tau_{a_2}^*$ and $\tau_{a_3}^*$.
\end{itemize}
 \end{corollary}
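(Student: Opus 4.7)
The plan is to read off (i) directly from the preceding lemma together with Proposition~\ref{prop:product}, and to reduce (ii) to an elementary statement about commuting involutions on $\mathbf P^1$.

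For (i), the preceding lemma asserts that $\tau_{a_1}^*,\tau_{a_2}^*,\tau_{a_3}^*$ all project into the same factor of $\operatorname{PSL}(2,\mathbf C)\times\operatorname{PSL}(2,\mathbf C)\cong\operatorname{PSO}(4,\mathbf C)$, and Proposition~\ref{prop:product} tells us that one of the two factors acts trivially on $\mathcal L_+$ while the other acts trivially on $\mathcal L_-$. I would therefore simply name $\mathcal L_-$ the ruling on which this common factor acts trivially; this is precisely the relabelling allowed by the phrase ``up to permuting $\mathcal L_-$ and $\mathcal L_+$'' preceding the corollary, and (i) is immediate.

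For (ii), restriction to the nontrivial factor realises the three maps as elements of $\operatorname{PSL}(2,\mathbf C)$ acting on $\mathcal L_+\cong\mathbf P^1$. Each is a nontrivial involution there: from $(\tau_{a_i}^*)^2=-\operatorname{Id}$ one sees that $\tau_{a_i}^*\neq\pm\operatorname{Id}$ in $\operatorname{SO}(4,\mathbf C)$, hence it is nontrivial in $\operatorname{PSO}(4,\mathbf C)$, and since it lies in a single factor it is nontrivial there. I would then check pairwise distinctness in $\operatorname{PSL}(2,\mathbf C)$: if say $\tau_{a_1}^*=\tau_{a_2}^*$, the relation $\tau_{a_1}^*\tau_{a_2}^*=\pm\tau_{a_3}^*$ used at the end of the previous lemma would force $\tau_{a_3}^*$ to be trivial in that factor, contradicting what we just observed.

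The final ingredient is the elementary observation that two distinct commuting nontrivial involutions $A,B\in\operatorname{PSL}(2,\mathbf C)$ have disjoint fixed point sets on $\mathbf P^1$: $B$ preserves the two-point set $\operatorname{Fix}(A)$, and were it to fix each point it would be simultaneously diagonalisable with $A$ and hence equal to $A$, so $B$ must swap the two fixed points of $A$. Applied to $\tau_{a_1}^*$ and $\tau_{a_2}^*$ this already shows that no point of $\mathcal L_+$ is fixed by both, a fortiori by all three, which is (ii). The only step with any content is the pairwise-distinctness check, for which the product relation $\tau_{a_1}^*\tau_{a_2}^*=\pm\tau_{a_3}^*$ is essential; everything else follows from the previous lemma and Proposition~\ref{prop:product}.
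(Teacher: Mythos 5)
Your proposal is correct and follows essentially the same route as the paper: part (i) is the relabelling permitted by the sentence preceding the corollary, and part (ii) is exactly the paper's remark that the Klein $4$-group generated by the three involutions in $\operatorname{PSL}(2,\mathbf C)$ has no global fixed point on $\mathbf P^1$, which you simply prove in detail (nontriviality, pairwise distinctness via $\tau_{a_1}^*\tau_{a_2}^*=\pm\tau_{a_3}^*$, and disjointness of fixed-point sets of distinct commuting involutions). The only ingredient you use tacitly is that the involutions commute, which follows at once from that same product relation, so there is no gap.
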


Notice that, for assertion (ii), we use  that the subgroup of $\operatorname{PSL}(2,\mathbf C)$ consisting of three involutions and the identity
(also called the 4-Klein group) has no global fixed point in $\mathbf P ^1$.

Let $\operatorname{Im}$ denote the image. Since $\operatorname{Im}(i_1^*)\oplus \operatorname{Im}(i_2^*)= H^1(S;\rho_0)$, from    Proposition~\ref{prop:ruling} (iii) we deduce:

\begin{corollary}
\label{cor:imi1i22}
Either both $\operatorname{Im}(i_1^*)$ and $ \operatorname{Im}(i_2^*)$  belong to $\mathcal L_-$, or they both belong to $\mathcal L_+$.
\end{corollary}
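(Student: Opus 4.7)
The plan is to translate the corollary into projective geometry via the dictionary established earlier: isotropic planes in $H^1(S;\rho_0)\cong\mathbf{C}^4$ correspond to projective lines in the ruled quadric $Q\subset\mathbf{P}^3$.

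First I would invoke Lemma \ref{lem:isotropic} (and its analogue for $k=1$) to see that both $\operatorname{Im}(i_1^*)$ and $\operatorname{Im}(i_2^*)$ are isotropic planes in $H^1(S;\rho_0)$ with respect to $\cup$, hence each defines a projective line in $Q$. Next I would use the Mayer-Vietoris isomorphism (\ref{eqn:mv}), which says
\[
\operatorname{Im}(i_1^*)\oplus\operatorname{Im}(i_2^*)=H^1(S;\rho_0).
\]
In particular $\operatorname{Im}(i_1^*)\cap\operatorname{Im}(i_2^*)=0$, so the two associated projective lines in $Q$ are disjoint in $\mathbf{P}^3$.

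Finally, I would apply Proposition \ref{prop:ruling} (iii): two lines of $Q$ meet if and only if they belong to different rulings. Disjoint lines must therefore lie in the same ruling, so either both are in $\mathcal{L}_+$ or both in $\mathcal{L}_-$, which is precisely the statement of the corollary.

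There is no real obstacle here; the corollary is essentially a one-line deduction once the geometric framework is in place. The only subtle point worth noting is the direction of the biconditional in Proposition \ref{prop:ruling} (iii): we need the contrapositive (disjoint implies same ruling), so one should check that the lines indeed intersect in $\mathbf{P}^3$ precisely when the underlying planes have nonzero intersection in $\mathbf{C}^4$, which is immediate from the definition of a projective line as the projectivization of a $2$-plane.
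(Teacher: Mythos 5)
Your proof is correct and follows exactly the paper's reasoning: the paper deduces the corollary in one line from the direct-sum decomposition $\operatorname{Im}(i_1^*)\oplus\operatorname{Im}(i_2^*)=H^1(S;\rho_0)$ given by the Mayer--Vietoris isomorphism~(\ref{eqn:mv}) together with Proposition~\ref{prop:ruling}~(iii), just as you do. Your remark that Lemma~\ref{lem:isotropic} is needed for $k=1$ as well as $k=2$ is a fair observation (the paper's proof of that lemma is in fact written for general $k$), but otherwise there is nothing to add.
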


Either $\operatorname{Im}(i_2^*)$ belongs to $\mathcal L_-$ and, by Corollary~\ref{coro:tauasigmab}, we may apply Proposition~\ref{prop:criterion},
 or  $\operatorname{Im}(i_2^*)$ belongs to $\mathcal L_+$.
To get rid of this last case we will use a deformation argument. The idea is to deform the hyperbolic  structure on $M_2$, so that it matches with another tangle
which is invariant under the involutions $\tau_i$. By Corollary~\ref{coro:tauasigmab}(ii), the tangle invariant by the involutions satisfies $\operatorname{Im}(i_1^*)\in \mathcal L_-$,
hence $\operatorname{Im}(i_2^*)\in \mathcal L_-$ for the deformed structure on $M_2$,
by Corollary~\ref{cor:imi1i22}. Then we shall use a continuity argument
to have the same conclusion for the initial structure on $M_2$.
 Next subsection is devoted to this deformation argument.

\subsection{A deformation argument}
\label{subsection:firstdfm argument}

Let $A= \overline{\partial M_2\setminus S}$ be the pair of annuli, one around each arc of $K\cap B_2$. The pair $(M_2,A)$ is a \emph{pared} manifold.

\begin{definition}
\label{def:pared}
A \emph{pared} manifold is a pair $(N,P)$, where $N$ is a compact oriented 3-manifold, $P\subset \partial N$ is a union of tori and annuli, such that 
\begin{itemize}
\item[(i)]  no two components of $P$ are isotopic in $\partial N$,
\item[(ii)] every abelian noncyclic subgroup of $\pi_1(N)$ is conjugate to a subgroup of a component or $P$, and
\item[(iii)] there are no essential annuli $(S^1\times [0,1],S^1\times \partial [0,1])\to (M,P)$.
\end{itemize}
\end{definition}

We say that a pared manifold $(N,P)$ is hyperbolic when the interior of $N$ admits a complete hyperbolic structure with cusps at $P$. The rank of the cusp is one for an annulus, and two for a torus.

\begin{lemma}
\label{lemma:tangle_hyp}
 There exists a pared manifold $(M_3, A')$, such that
\begin{enumerate}
 \item $(M_3,A')$ is obtained from a $2$-tangle: namely $M_3$ is the exterior of two properly embedded 
arcs in a 3-ball,   $A'$ are the annuli around the arcs of the tangle, and $A'\cup S=\partial M_3$.
 \item For $i=1,2,3$, $\tau_i:S\to S$ extends to an involution of $(M_3, A')$.
 \item The pared manifolds $(M_3,A')$ and $(M_2\cup M_3, A\cup A')$ are both  hyperbolic.
\end{enumerate}
\end{lemma}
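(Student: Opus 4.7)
The plan is to exhibit an explicit tangle that is symmetric under the Klein 4-group generated by the three Conway involutions, and then to invoke Thurston's hyperbolization theorem for pared Haken manifolds, combined with a hyperbolic Dehn filling argument to ensure that the gluing with $M_2$ is also hyperbolic.

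First I would construct a 2-string tangle $T\subset B^3$ whose complement $M_3$ admits an extension of the Klein 4-group action $\langle\tau_1,\tau_2,\tau_3\rangle$ on $C=\partial B^3$. A natural candidate is a suitable symmetric Montesinos or pretzel tangle, or else the intersection with a ball of a highly symmetric hyperbolic link admitting $C$ as a Conway sphere stabilized by the Klein 4-group. The arcs of $T$ can be arranged so that two of the $\tau_i$ exchange them and the third preserves each. Items (1) and (2) of the lemma are then immediate. To establish that $(M_3,A')$ is pared hyperbolic, I would verify that $M_3$ is irreducible, atoroidal, admits no essential annulus with both boundary components in $A'$ other than those parallel to a component of $A'$, and that the two components of $A'$ are non-isotopic in $\partial M_3$. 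These are combinatorial conditions on the tangle diagram that hold whenever $T$ is prime and sufficiently complicated, after which Thurston--Morgan's pared hyperbolization theorem yields the hyperbolic structure on the interior with rank-one cusps at $A'$.

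To verify that $(M_2\cup_S M_3,\,A\cup A')$ is pared hyperbolic, note that $S$ is essential in $M_2$ by the standing hypothesis and in $M_3$ by construction, and that the pared loci $A$ and $A'$ match along $\partial S$, so the union is pared Haken. To apply hyperbolization one must rule out essential tori and essential annuli with boundary on $A\cup A'$. Any such surface $F$ can be put in general position with $S$; innermost-disk and outermost-arc moves simplify $F\cap S$, and each surviving component of $F\cap M_k$ becomes an essential subsurface with controlled boundary. The essentiality of $S$ in both pieces together with the combinatorial control on $T$ then rules them out.

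The main obstacle is precisely this last compatibility step: for a fixed symmetric tangle, the gluing with $M_2$ might still harbor essential tori or annuli that are not present in either piece. The remedy is to work with a family of tangles. Starting from a $\langle\tau_1,\tau_2,\tau_3\rangle$-symmetric cusped hyperbolic tangle exterior $M_3^\infty$ with one extra invariant cusp, one obtains a family $M_3^{(n)}$ by equivariant Dehn fillings of the extra cusp along slopes going to infinity; each $M_3^{(n)}$ inherits the symmetry. Thurston's hyperbolic Dehn surgery theorem, applied both to $M_3^\infty$ and to the manifold $M_2\cup_S M_3^\infty$ (which is arranged to be hyperbolic by a one-off verification using the essentiality of $S$), ensures that for all sufficiently large $n$ both $(M_3^{(n)},A')$ and $(M_2\cup_S M_3^{(n)},A\cup A')$ are hyperbolic. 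Choosing any such $n$ yields the desired pared manifold $(M_3,A')$.
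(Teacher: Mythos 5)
There is a genuine gap in your treatment of item (3), and it is located exactly where you yourself flag the ``main obstacle.'' The property of the tangle that makes the gluing work is not just that $(M_3,A')$ is pared and $S$ essential: you need $M_3$ to be \emph{simple} in the sense of Wu --- irreducible, $\partial$-irreducible, atoroidal and \emph{anannular}, i.e.\ admitting no essential annulus at all, in particular none with boundary on $S$ (equivalently, $S$ is totally geodesic in the hyperbolic structure on $M_3$). Your verification only excludes essential annuli with both boundary components in $A'$, which is merely the pared condition; it says nothing about essential annuli with boundary on $S$, and those are precisely the ones that can match up with essential annuli of $M_2$ (which a pared-hyperbolic $(M_2,A)$ is allowed to have, since its non-pared boundary $S$ need not be geodesic) to produce essential tori or annuli in $M_2\cup_S M_3$. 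Once $M_3$ is anannular, the obstacle you worry about evaporates: any essential torus or annulus in the union, isotoped to meet $S$ minimally, either lies in one piece (impossible, both are atoroidal/acylindrical in the relevant sense) or intersects $M_3$ in essential annuli with boundary on $S$ (impossible by anannularity). This is the ``standard argument'' the paper invokes, after citing Wu \cite{Wu} for the existence of \emph{simple} tangles with the required $\tau_1,\tau_2,\tau_3$ symmetry. So the correct fix is a stronger hypothesis on the single tangle, not a family of tangles.

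Your proposed Dehn-filling remedy does not close the gap, because it is circular at the base case: to apply hyperbolic Dehn surgery to $M_2\cup_S M_3^{\infty}$ you must first know that this union is hyperbolic, and your ``one-off verification using the essentiality of $S$'' faces exactly the same difficulty --- essentiality of $S$ gives incompressibility and Haken-ness of the union but does not rule out essential tori assembled from annuli in the two pieces. (If instead you arrange $M_3^{\infty}$ to be anannular so that the base case goes through, the Dehn-filling scaffolding becomes unnecessary.) Items (1) and (2) of Lemma~\ref{lemma:tangle_hyp} and your appeal to Thurston's pared hyperbolization for $(M_3,A')$ alone are fine.
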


\begin{proof}[Proof of Lemma~\ref{lemma:tangle_hyp}]
We take $M_3$ to be the exterior of a simple 2-tangle that is symmetric with respect to $\tau_1$, $\tau_2$ and $\tau_3$. Here simple means that $M_3$ is irreducible, $\partial$-irreducible, atoroidal and anannular.
In \cite{Wu}, Wu gives a criterion for deciding when a rational tangle is simple and provides  examples of simple tangles with the required symmetries.
In particular, the pared manifold  $(M_3,A')$ admits a hyperbolic structure with totally geodesic boundary in $S=\partial M_3\setminus A'$ (and rank one cusps in $A'$).
Since $(M_3,A')$ is simple and $(M_2,A)$ hyperbolic, standard arguments in 3-dimensional topology prove that   $(M',T')=(M_2\cup M_3, A\cup A')$ is irreducible, acylindrical, atoroidal and not Seifert fibered 
($S$ should be horizontal in a Seifert fibration), hence hyperbolic.
 \end{proof}

The variety or representations of $\pi_1(M_2)$ to $\operatorname{SL}(2,\mathbf C)$ is denoted by
$$
R(M_2)=\hom ( \pi_1(M_2), \operatorname{SL}(2,\mathbf C)),
$$
and it is an algebraic subset of affine space $\mathbf C^N$.

\begin{lemma}
 \label{lemma:almostequal-2} If $\operatorname{trace}(\rho(\mu))=-2$, then
$\operatorname{Im}(i_2^*)$ belongs to $\mathcal L_-$.
\end{lemma}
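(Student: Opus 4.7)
The plan is to execute the strategy sketched after Corollary~\ref{cor:imi1i22}: use the symmetric tangle exterior $(M_3,A')$ provided by Lemma~\ref{lemma:tangle_hyp} to force the conclusion on the glued manifold $M'=M_2\cup_S M_3$, and then deform back to the original $M_2$ via a continuity argument.

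First I would apply the machinery of Sections~\ref{sec:mutation} and~\ref{subsection:perfectpairing} to $M'$ in place of $M$. Let $\rho'$ be a lift of the holonomy of $M'$ with $\operatorname{trace}(\rho'(\mu))=-2$, and write $\rho_2',\rho_3',\rho_0'$ for its restrictions to $\pi_1(M_2),\pi_1(M_3),\pi_1(S)$. By the same arguments that gave Lemmas~\ref{lem:cohomS}, \ref{lem:cohomM_i} and~\ref{lem:isotropic}, one has $H^1(S;\rho_0')\cong\mathbf C^4$, and the images of $(i_2')^*\colon H^1(M_2;\rho_2')\to H^1(S;\rho_0')$ and of $i_3^*\colon H^1(M_3;\rho_3')\to H^1(S;\rho_0')$ are complementary isotropic planes, that is, projective lines in the ruled quadric $Q\subset\mathbf P^3$.

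Next I would exploit the symmetry of $M_3$. By Lemma~\ref{lemma:tangle_hyp}(2), each $\tau_i$ extends to an involution $\widetilde\tau_i$ of $(M_3,A')$, so $\rho_3'\circ\widetilde\tau_{i*}$ is conjugate to $\rho_3'$ by $\pm a_i$. Consequently the induced map $\tau_{a_i}^*$ on $H^1(S;\rho_0')$ preserves $\operatorname{Im}(i_3^*)$. By Corollary~\ref{coro:tauasigmab}(i) combined with Proposition~\ref{prop:product}, each $\tau_{a_i}^*$ acts trivially on $\mathcal L_-$ and as a nontrivial involution on $\mathcal L_+\cong\mathbf P^1$. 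If $\operatorname{Im}(i_3^*)$ belonged to $\mathcal L_+$ it would therefore be a common fixed point for the three involutions $\tau_{a_1}^*,\tau_{a_2}^*,\tau_{a_3}^*$ on $\mathbf P^1$, contradicting Corollary~\ref{coro:tauasigmab}(ii). Hence $\operatorname{Im}(i_3^*)\in\mathcal L_-$, and Corollary~\ref{cor:imi1i22} applied to $M'$ yields $\operatorname{Im}((i_2')^*)\in\mathcal L_-$.

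Finally I would run a continuity argument joining $\rho_2$ to $\rho_2'$. Both representations arise from lifts of holonomies of complete pared hyperbolic structures on $(M_2,A)$, and by Thurston's deformation theory such structures form a smooth space; in particular $\rho_2$ and $\rho_2'$ lie on a common component of the subvariety of $R(M_2)$ cut out by the trace condition $\operatorname{trace}=-2$ on the meridians. Along a continuous path $\rho_2(t)$ joining them, Lemmas~\ref{lem:cohomS}--\ref{lem:cohomM_i} keep the dimensions of $H^1(S;\rho_0(t))$ and $H^1(M_2;\rho_2(t))$ constant, the quadric $Q(t)\subset\mathbf P^3$ varies algebraically together with its two rulings, and $\operatorname{Im}(i_2(t)^*)$ is a continuous family of lines in $Q(t)$. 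Because $\mathcal L_+(t)$ and $\mathcal L_-(t)$ are the two disjoint connected components of the space of lines in $Q(t)$, the assignment $t\mapsto$ ``ruling containing $\operatorname{Im}(i_2(t)^*)$'' is locally constant; as the value at $t=1$ is $\mathcal L_-$, the same holds at $t=0$. The main obstacle is precisely this last step: one must actually produce the path realizing both endpoints as geometric holonomies and verify that the Mayer--Vietoris dimensions, the pairing, and hence the ruling decomposition deform algebraically along it. This reduces to the smoothness of the geometric component of the character variety of the pared hyperbolic manifold $(M_2,A)$ at the complete structure, which is classical.
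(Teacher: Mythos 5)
Your argument is essentially the paper's own proof: pass to the symmetric tangle $M_3$ of Lemma~\ref{lemma:tangle_hyp}, where invariance of $\operatorname{Im}(i_3^*)$ under all three $\tau_{a_i}^*$ forces it into $\mathcal L_-$ via Corollary~\ref{coro:tauasigmab}(ii) and Corollary~\ref{cor:imi1i22}, then carry the conclusion back to $\rho_2$ along a path of holonomy lifts using that the two rulings remain disjoint. The one imprecise point is your claim that Lemmas~\ref{lem:cohomS}--\ref{lem:cohomM_i} keep $\dim H^1(M_2;\rho_2(t))$ constant along the path: the proof of Lemma~\ref{lem:cohomM_i} relies on acyclicity of the ambient $M$, which is unavailable mid-deformation, so the paper instead imposes $\dim H^1(M_2;\varphi_t)=2$ as an explicit condition on the path and achieves it by upper semicontinuity of cohomology (the jump locus is a proper Zariski-closed subset, of real codimension at least two, which a path in the connected Ahlfors--Bers deformation space of the pared manifold $(M_2,A)$ can avoid, with the density theorem covering non-geometrically-finite endpoints).
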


\begin{proof}
We connect $\rho_2\in R(M_2)$ to $\rho_2'\in R(M_2)$, a lift of the holonomy representation of $M_2$ that matches with the tangle $M_3$ of Lemma~\ref{lemma:tangle_hyp}, which is a symmetric tangle.
Namely we want to  find a path or representations 
$$
\begin{array}{rcl}
  [0,1]&\to &R(M_2) \\
      t & \mapsto & \varphi_t
\end{array}
$$ 
that satisfies:
\begin{itemize}
 \item[(i)] $\varphi_0=\rho_2$.
  \item[(ii)] $\forall t\in [0,1]$, $\varphi_t$ is the lift of the holonomy of a hyperbolic structure on $M_2$, with rank one cusps at the arcs $K\cap B_2$, and satisfying 
	  \[\operatorname{trace}(\varphi_t(\mu))=-2.\]
  \item[(iii)] $\forall t\in [0,1]$, $\dim H_1(M_2;\varphi_t)=2$.
  \item[(iv)] $\varphi_1=\rho_2'$ is the lift of the holonomy of a hyperbolic structure on $M_2$ that matches with $M_3$  in Lemma~\ref{lemma:tangle_hyp}.
\end{itemize}
 
Assuming we have this path of representations, then 
since $M_3$ is $\tau_1$ and $\tau_2$-invariant, the image of $i_3^*:H^1(M_3;\varphi_1)\to H^1(S;\varphi_1\vert_{\pi_1(S)})$ is a subspace $\tau_{a_i}^*$-invariant.
Hence the image of $i_3^*$ must be contained in $\mathcal L_-$, by Corollary~\ref{coro:tauasigmab} (ii). This implies that for this hyperbolic structure 
$$
\operatorname{Im}\left(i_2^*: H^1(M_2;\rho_2')\to H^1(S;\rho_2'\vert_{S})\right) \in \mathcal L_-,
$$
 by Corollary~\ref{cor:imi1i22}. Now, since there exists the path $\varphi_t$,
the ruled quadric of $H^1(S_0;\varphi_t)$ is also deformed continuously (notice that as $\varphi_t\vert_{S_0}$ is irreducible and $\varphi_t$ of a meridian 
  has trace $-2$, by (iii),  Lemmas~\ref{lem:cohomS}, \ref{lemma:Spartial}, and \ref{lem:isotropic} apply to $H^1(S_0;\varphi_t)$).
Hence along the deformation, the image of
$i_2^*$ is contained in $\mathcal L_-$, as $\mathcal L_+\cap \mathcal L_-=\emptyset$. Hence 
$$
\operatorname{Im}\left(i_2^*: H^1(M_2;\rho_2)\to H^1(S;\rho_0) \right)\in \mathcal L_-,
$$
as claimed.

Let us justify the existence of the path $\phi_t$ between $\rho_2$ and $\rho_2'$.
If both $\rho_2(\pi_1(M_2))$  and  $\rho_2'(\pi_1(M_2))$ are geometrically finite, then they can be connected along the space of geometrically finite structures of the pared manifold, because by Ahlfors-Bers theorem 
this space is isomorphic to the Teichm\"uller space of $S$, cf.\ \cite{Otal}.  In addition, this is an open subset of the variety of representations, and since the dimension of 
de cohomology is upper semi-continuous (it can only jump in a Zariski closed subset), (iii) can be achieved by avoiding a proper  Zariski closed  subset (hence of real codimension $\geq 2$). 
If any of $\rho_2(\pi_1(M_2))$  and  $\rho_2'(\pi_1(M_2))$  is not geometrically finite, then it lies in the closure of geometrically finite structures (cf.\ \cite{OtalFib} though this is a 
particular case of the density theorem), thus there is still a path in the space
of representations satisfying (ii) and (iii). 
 \end{proof}

By Lemma~\ref{lemma:almostequal-2}, Corollary~\ref{cor:imi1i22}  and Proposition~\ref{prop:criterion}, 
$$
\tor(M,\rho)=\pm \tor(M^\tau,\rho^\tau).
$$
We shall prove that there is also equality of signs:

\begin{proposition}
 \label{proposition:equal-2} If $\operatorname{trace}(\rho(\mu))=-2$, then
$$
\tor(M,\rho)= \tor(M^\tau,\rho^\tau).
$$
\end{proposition}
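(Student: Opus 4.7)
By Lemma~\ref{lemma:almostequal-2}, Corollary~\ref{cor:imi1i22}, and Proposition~\ref{prop:criterion}, the images $\operatorname{Im}(i_1^*)$ and $\operatorname{Im}(i_2^*)$ both lie in the ruling $\mathcal{L}_-$, and Corollary~\ref{coro:tauasigmab}(i) shows that $\tau_a^*$ fixes each individual line of $\mathcal{L}_-$, so $\tau_a^*$ preserves the plane $\operatorname{Im}(i_2^*)\subset H^1(S;\rho_0)$. In the basis $i_1^*(b_1)\sqcup i_2^*(b_2)$ of $H^1(S;\rho_0)$, the vectors $\tau_a^*(i_2^*(b_2))$ have zero components on the $i_1^*(b_1)$-part, so the change-of-basis matrix appearing in Equation~(\ref{eqn:ratiotorsions}) is block-diagonal with blocks $I_2$ and $T:=\tau_a^*|_{\operatorname{Im}(i_2^*)}$. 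Hence
\[
\frac{\tor(M,\rho)}{\tor(M^\tau,\rho^\tau)}=\det(T),
\]
and the whole task is to prove that $\det(T)=+1$ (rather than $-1$, which is the other possibility allowed by $T^2=-\operatorname{Id}$).

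The computation of $\det(T)$ is carried out through the tensor description of the quadric in Propositions~\ref{prop:ruling}--\ref{prop:product}. I would fix an identification $H^1(S;\rho_0)\cong V\otimes W$ with $V,W\cong \mathbf{C}^2$, symplectic forms on $V$ and $W$ whose tensor product is $\cup$, and in which the ruling $\mathcal{L}_-$ is realized as the family of planes $\{V\otimes w : [w]\in\mathbf{P}(W)\}$. Since Corollary~\ref{coro:tauasigmab}(i) says $\tau_a^*$ acts trivially on $\mathcal{L}_-$, Proposition~\ref{prop:product} forces its class in $\operatorname{PSL}(2,\mathbf{C})\times\operatorname{PSL}(2,\mathbf{C})$ to be of the form $([A],[\operatorname{Id}])$, and therefore $\tau_a^*=\pm(A\otimes \operatorname{Id}_W)$ in $\operatorname{GL}(V\otimes W)$ for some $A\in\operatorname{SL}(2,\mathbf{C})$. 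The relation $(\tau_a^*)^2=-\operatorname{Id}$ forces $A^2=-\operatorname{Id}$ in $\operatorname{SL}(2,\mathbf{C})$; together with $\det(A)=1$ this gives $\operatorname{trace}(A)=0$, so the eigenvalues of $A$ on $V$ are exactly $+i$ and $-i$.

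To conclude, since $\operatorname{Im}(i_2^*)\in\mathcal{L}_-$ has the form $V\otimes w_0$ for some nonzero $w_0\in W$, the restriction $T=\tau_a^*|_{V\otimes w_0}$ acts as $\pm A$ on the first factor, so its eigenvalues are $\pm i$ (in some order). Therefore $\det(T)=(+i)(-i)=1$, which yields $\tor(M,\rho)=\tor(M^\tau,\rho^\tau)$. The only mild obstacle is the double sign ambiguity in lifting $\tau_a^*$ from $\operatorname{PSO}(4,\mathbf{C})$ to $\operatorname{SL}(2,\mathbf{C})\times\operatorname{SL}(2,\mathbf{C})$, but this is harmless because replacing $A$ by $-A$ preserves the unordered pair of eigenvalues $\{+i,-i\}$, and hence the determinant on the two-dimensional space $V\otimes w_0$ is unchanged.
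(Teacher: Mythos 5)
Your proof is correct, and for the decisive step---pinning down the sign---it takes a genuinely different route from the paper. Both arguments agree up to the reduction $\tor(M,\rho)/\tor(M^\tau,\rho^\tau)=\det\bigl(\tau_a^*|_{\operatorname{Im}(i_2^*)}\bigr)$, with $\operatorname{Im}(i_2^*)\in\mathcal L_-$ supplied by Lemma~\ref{lemma:almostequal-2}. The paper then reuses the deformation $\varphi_t$ of that lemma a second time: the eigenvalues of the restriction stay in $\{\pm i\}$ and vary continuously, so the determinant is constant along the path and equals its value $+1$ at the symmetric endpoint $\rho_2'$. You instead compute the determinant outright from the structure $\operatorname{SO}(4,\mathbf C)\cong\operatorname{SL}(2,\mathbf C)\times\operatorname{SL}(2,\mathbf C)/\pm(\Id,\Id)$: triviality on $\mathcal L_-$ forces $\tau_a^*=\pm(A\otimes\Id_W)$ with $A\in\operatorname{SL}(2,\mathbf C)$ and $A^2=-\Id$, and the restriction to any plane $V\otimes w_0$ of the ruling $\mathcal L_-$ is then conjugate to $\pm A$, whose determinant is $\det A=1$; the lifting ambiguity is indeed harmless since $\det(-A)=\det A$. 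This is a clean, purely linear-algebraic replacement for the second continuity argument, and it makes transparent exactly where the sign comes from: had $\operatorname{Im}(i_2^*)$ been a $\tau_a^*$-invariant plane of the other ruling $\mathcal L_+$, the restriction would be $\pm i\,\Id$ with determinant $-1$, so it is membership in $\mathcal L_-$ (not mere invariance) that forces $+1$. The only dependence on the deformation that remains in your version is the one already packaged in Lemma~\ref{lemma:almostequal-2}.
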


\begin{proof}
To remove the sign ambiguity, we use again the deformation $\varphi_t$ of the proof of Lemma~\ref{lemma:almostequal-2}.
Since 
$\forall t\in [0,1]$,  $\varphi_t$ satisfies the sufficiency criterion of Proposition~\ref{prop:criterion},
 the eigenvalues of $\tau_{a_i}^*$ restricted to the image of $i_2^*$ belong to $\{\pm i\}$, and they 
do not change as we deform $t$, hence 
the determinant of   $\tau_{a_i}^*$  restricted to the image of $i_2^*$ is $+1$, because this holds for $\rho_2'=\varphi_1$
(as $M_3$ is $\tau_i$-invariant).
\end{proof}

\section{Invariance when $\operatorname{trace}(\rho(\mu))=+2$.}
\label{sec:trace+2}

When $\operatorname{trace}(\rho(\mu))=2$, then Lemma~\ref{lemma:Spartial} does not apply, hence we cannot use the argument of Section~\ref{sec:trace-2}. 
Recall that 
$$
R(M)=\hom(\pi_1(M),\operatorname{SL}(2,\mathbf C))
$$
denotes the variety of representations of $\pi_1(M)$ in $\operatorname{SL}(2,\mathbf C)$.
We will consider representations $\rho_n\in R(M)$ to which the arguments of Section~\ref{sec:trace-2} apply
and such that $\rho_n$ converges to $\rho$ in $R(M)$, as $n\to\infty$.

Let $\rho\in R(M)$ be a lift of the holonomy with  $\operatorname{trace}(\rho(\mu))=2$. By Thurston's hyperbolic Dehn filling and for $n\in\mathbf N$ large enough, the orbifold 
with underlying space $S^3$, branching locus $K$ and ramification index $n$ is hyperbolic. It induces a representation of $\pi_1(M)$ in $ \operatorname{PSL}(2,\mathbf C)$ 
that lifts to $\rho_n\in R(M)$. The lift satisfies $\operatorname{trace}(\rho_n(\mu))=\pm 2\cos(\pi/n)$, and   there is precisely one lift for every choice of sign. We chose the lift satisfying 
$\operatorname{trace}(\rho_n(\mu))=+ 2\cos(\pi/n)$.

\begin{proposition}[\cite{ThurstonNotes}]
 \label{prop:orbifold}
For $n\in\mathbf N$ large enough, there exist $\rho_n\in R(M)$ which is a lift of the holonomy of the orbifold 
with underlying space $S^3$, branching locus $K$ and ramification index $n$, so that $\rho_n\to \rho$.
\end{proposition}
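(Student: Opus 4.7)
The plan is to apply Thurston's hyperbolic Dehn surgery theorem in the orbifold setting. Thurston's deformation space parametrizes (incomplete) hyperbolic structures on the interior of $M$ near the complete structure by generalized Dehn filling coefficients on the unique cusp; the completion of the structure with coefficient $(n,0)$ (with respect to the meridian-longitude basis) is the hyperbolic orbifold with underlying space $S^3$, branching locus $K$ and ramification index $n$, while the coefficient $\infty$ corresponds to the complete structure whose holonomy lifts to $\rho$. Thurston's theorem asserts that for $(p,q)$ in a small neighborhood of $\infty$ the corresponding hyperbolic cone structure on $M$ exists and its $\operatorname{PSL}(2,\mathbf C)$-holonomy depends continuously on the parameters. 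Hence for $n$ large enough the orbifold $(S^3, K, n)$ is hyperbolic and its holonomy $\bar\rho_n\colon \pi_1(M)\to \operatorname{PSL}(2,\mathbf C)$ converges to the projectivization $\bar\rho$ of $\rho$ as $n\to\infty$.

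Next I would lift $\bar\rho_n$ from $\operatorname{PSL}(2,\mathbf C)$ to $\operatorname{SL}(2,\mathbf C)$. The set of $\operatorname{SL}(2,\mathbf C)$-lifts of a given $\operatorname{PSL}(2,\mathbf C)$-representation of $\pi_1(M)$ is a torsor over $H^1(\pi_1(M);\mathbf Z/2)\cong \mathbf Z/2$, so once one lift exists there are precisely two, distinguished by the sign of the trace on the meridian. The obstruction to existence is locally constant on the representation variety and vanishes at $\bar\rho$ by hypothesis, so a lift $\rho_n$ of $\bar\rho_n$ exists for all $n$ large. In the orbifold with ramification index $n$ the meridian $\mu$ is a rotation of angle $2\pi/n$, so any $\operatorname{SL}(2,\mathbf C)$-lift of its holonomy has trace $\pm 2\cos(\pi/n)$; I would select the lift with $\operatorname{trace}(\rho_n(\mu))=+2\cos(\pi/n)$.

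Finally I would verify that $\rho_n\to\rho$ in $R(M)$. The projection $R(M)\to \hom(\pi_1(M),\operatorname{PSL}(2,\mathbf C))$ is a local homeomorphism onto its image, so the continuity of $\bar\rho_n\to\bar\rho$ together with the continuous trace condition $+2\cos(\pi/n)\to +2=\operatorname{trace}(\rho(\mu))$ pins down the two-fold choice of lift and forces $\rho_n\to\rho$. The main obstacle is really the invocation of Thurston's hyperbolic Dehn surgery theorem (in its orbifold/cone-manifold form); once convergence at the $\operatorname{PSL}$-level is in hand, the lifting step and the sign selection are essentially bookkeeping on the two-fold cover $\operatorname{SL}(2,\mathbf C)\to\operatorname{PSL}(2,\mathbf C)$.
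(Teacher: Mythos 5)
Your proposal is correct and follows the same route the paper takes: the paper states this proposition as a citation to Thurston's notes, with the preceding paragraph giving exactly your argument (hyperbolic Dehn filling with coefficient $(n,0)$ produces the orbifold structures whose $\operatorname{PSL}(2,\mathbf C)$-holonomies converge, followed by lifting to $\operatorname{SL}(2,\mathbf C)$ and selecting the lift with $\operatorname{trace}(\rho_n(\mu))=+2\cos(\pi/n)$). Your additional detail on the lifting obstruction and the local-homeomorphism argument for convergence of the chosen lifts is a sound elaboration of what the paper leaves implicit.
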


These orbifolds can also be considered for the mutant knot, and there exist the corresponding mutant representations
$$
\rho_n^\tau\in R(M^{\tau}).
$$
Namely, the lifts of the holonomies of the orbifold structures on $K^\tau$ are the ``mutant representations'' of $\rho_n$.
 Moreover, $\rho_n^\tau\to \rho^{\tau}$.

\begin{lemma}
\label{lem:orbif}
For $n\in\mathbf N$ large enough, $H^*(M;\rho_n)\cong H^*(M^\tau;\rho_n^\tau)=0$ and 
$$
\tor(M,\rho_n)= \tor(M^\tau,\rho_n^\tau).
 $$
\end{lemma}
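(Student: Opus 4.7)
The approach is to reduce Lemma~\ref{lem:orbif} to the machinery of Section~\ref{sec:trace-2}. Since $\operatorname{trace}(\rho_n(\mu)) = 2\cos(\pi/n) \neq 2$ for $n \geq 2$, the sole trace-type hypothesis actually used in that section (cf.\ the remark preceding Lemma~\ref{lemma:Spartial}) is automatically satisfied. The proof thus splits into two parts: establishing acyclicity of $\rho_n$ on $M$ and of $\rho_n^\tau$ on $M^\tau$, and then running the argument of Section~\ref{sec:trace-2} for $\rho_n$, with the deformation step adapted to the orbifold setting.

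For acyclicity, I would argue by upper semi-continuity of $\dim H^i(M;\cdot)$ on the representation variety $R(M)$. Because $\rho$ is acyclic on $M$ and $\rho_n \to \rho$ by Proposition~\ref{prop:orbifold}, the representation $\rho_n$ lies in the Zariski open locus of acyclic representations for all $n$ large enough, so $H^*(M;\rho_n) = 0$. The argument for $\rho_n^\tau$ on $M^\tau$ is identical. This yields the vanishing in the statement and makes $\tor(M,\rho_n)$ and $\tor(M^\tau,\rho_n^\tau)$ well-defined nonzero complex numbers.

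For the torsion equality, Lemmas~\ref{lemma:Spartial}--\ref{lem:isotropic}, together with the ruled quadric analysis of Subsection~\ref{subsec:quadric} and Corollary~\ref{coro:tauasigmab}, carry over to $\rho_n$ without change. It remains to establish the analog of Lemma~\ref{lemma:almostequal-2}: that $\operatorname{Im}(i_2^*)$ in $H^1(S;\rho_n|_{\pi_1(S)})$ belongs to $\mathcal{L}_-$. For this I would replicate the deformation argument of Subsection~\ref{subsection:firstdfm argument} within the subset of $R(M_2)$ consisting of holonomy lifts of orbifold hyperbolic structures with cone angle $2\pi/n$ along the tangle arcs. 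By Thurston's orbifold hyperbolic Dehn filling \cite{ThurstonNotes}, for $n$ sufficiently large the pared manifold $M_2\cup M_3$ (with $M_3$ the symmetric tangle of Lemma~\ref{lemma:tangle_hyp}) carries such an orbifold structure, providing a $\tau_i$-symmetric target representation $\rho_n'$. The deformation path $\varphi_t^n$ from $\rho_n|_{\pi_1(M_2)}$ to $\rho_n'$ lives in the geometrically finite orbifold deformation space and maintains the analogs of conditions (i)--(iv) of Lemma~\ref{lemma:almostequal-2} with $-2$ replaced by $2\cos(\pi/n)$. Then Corollary~\ref{cor:imi1i22} combined with continuity of $\operatorname{Im}(i_2^*)$ inside the ruled quadric forces $\operatorname{Im}(i_2^*) \in \mathcal{L}_-$, and the sign is pinned down to $+1$ exactly as in Proposition~\ref{proposition:equal-2}: the $\tau_i$-symmetry of $M_3$ makes the relevant determinant equal to $+1$ at the endpoint $\rho_n'$, and this value is preserved along the path.

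The main obstacle is the simultaneous control, for a single large $n$, of the three orbifold hyperbolic structures required (on $M$, on $M^\tau$, and on $M_2 \cup M_3$), together with a deformation path in the orbifold locus of $R(M_2)$ that avoids jumps in cohomology dimension. This is essentially a repeated application of Thurston's Dehn surgery and density-type theorems, but the coherence of the choice of $n$ and the avoidance of the higher-cohomology strata along the path require careful bookkeeping.
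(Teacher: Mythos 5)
Your proposal is correct and follows essentially the same route as the paper: acyclicity via upper semi-continuity of cohomology on $R(M)$ applied to $\rho_n\to\rho$ and $\rho_n^\tau\to\rho^\tau$, then the observation that $\operatorname{trace}(\rho_n(\mu))\neq 2$ lets the whole Section~\ref{sec:trace-2} machinery (pairing, isotropic planes, ruled quadric) apply, with the deformation argument of Lemma~\ref{lemma:almostequal-2} transplanted from pared structures on $(M_2,A)$ to orbifold structures on $B_2$ with branching index $n$ along $K\cap B_2$. Your explicit appeal to orbifold Dehn filling for the symmetric endpoint and to the sign argument of Proposition~\ref{proposition:equal-2} only makes precise steps the paper leaves implicit.
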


\begin{proof}

We use semi-continuity of cohomology  to say that $H^*(M;\rho_n)\cong H^*(M^\tau;\rho_n^\tau)=0$.
More precisely,  the dimension of cohomology is an upper semi-continuous function on $R(M)$,
cf.~\cite{Hartshorne}.
Hence, as $\rho$ and $\rho^\tau$ are acyclic, then all representations in a Zariski open subset containing $\rho$ and $\rho^\tau$ 
 are acyclic, and so are $\rho_n$
and $\rho_n^\tau$, as claimed. 

Since $\rho_n$
and $\rho_n^\tau$ are acyclic, 
  $\dim H_1(M_2;\rho_n)=\dim H_1(M_2;\rho_n^\tau)=2 $. 
In addition, up to conjugation,
$$
\rho_n(\mu) \sim \begin{pmatrix} e^{\pi i/n} & 0 \\ 0 & e^{-\pi i/n} \end{pmatrix}.
$$
Hence $\mathbf C^2$ has no 
$ \rho_n(\mu)$-invariant proper subspaces and Lemma~\ref{lemma:Spartial} applies.
 Thus the pairing~(\ref{eqn:SS}), Lemma~\ref{lem:isotropic} and all the results of Section~\ref{subsection:perfectpairing} hold true for $\rho_n$.
To conclude, for  the deformation argument, we use that $\rho_n$ is the lift of the holonomy of an orbifold. Instead of working with pared structures on
$(M_2,A)$, we work with orbifold structures with underlying space the ball $B_2$, branching locus $K\cap B_2$ and branching index $n$. The results on
the space of hyperbolic structures (geometrically finite or infinite) apply, and we may use the
 deformation argument of Lemma~\ref{lemma:almostequal-2}.
\end{proof}

By Proposition~\ref{prop:orbifold} and Lemma~\ref{lem:orbif}, by taking the limit when $n\to\infty$ we get:

\begin{corollary}
\label{cor:equal-2} If $\operatorname{trace}(\rho(\mu))=+2$, then
$$
\tor(M,\rho)= \tor(M^\tau,\rho^\tau).
$$
\end{corollary}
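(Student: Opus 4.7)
The proof is essentially a limiting argument that reduces the $\operatorname{trace}(\rho(\mu))=+2$ case to the already-treated $\operatorname{trace}\neq 2$ case via approximation by orbifold holonomies. My plan is to deduce the corollary directly from Proposition~\ref{prop:orbifold} and Lemma~\ref{lem:orbif} using the continuity of Reidemeister torsion on the acyclic locus of the representation variety.

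First, I invoke Proposition~\ref{prop:orbifold} to obtain, for all sufficiently large $n$, a representation $\rho_n\in R(M)$ lifting the holonomy of the hyperbolic orbifold with underlying space $S^3$, branching locus $K$ and ramification index $n$, normalized so that $\operatorname{trace}(\rho_n(\mu))=+2\cos(\pi/n)$ and $\rho_n\to\rho$ in $R(M)$. The mutant construction of Section~\ref{sec:mutation} applied to $\rho_n$ produces $\rho_n^\tau\in R(M^\tau)$, which is similarly the holonomy lift of the corresponding orbifold on $K^\tau$; by naturality of the construction $\rho_n^\tau\to\rho^\tau$ in $R(M^\tau)$.

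Second, by Lemma~\ref{lem:orbif}, for all such sufficiently large $n$ the representations $\rho_n$ and $\rho_n^\tau$ are acyclic and satisfy $\tor(M,\rho_n)=\tor(M^\tau,\rho_n^\tau)$. (This lemma is what transports the Section~\ref{sec:trace-2} machinery to the orbifold setting: because $\operatorname{trace}(\rho_n(\mu))\neq\pm 2$, the eigenvalues of $\rho_n(\mu)$ are distinct, so $H^*(\partial S;\rho_n|_{\pi_1(S)})=0$ as in Lemma~\ref{lemma:Spartial}, and the perfect pairing plus ruled-quadric analysis go through verbatim.)

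Third, I take the limit $n\to\infty$. By \cite{MenalPorti}, both $\rho$ and $\rho^\tau$ are acyclic. Since the cellular chain complexes $C_*(\widetilde M;\mathbf Z)$ and $C_*(\widetilde{M^\tau};\mathbf Z)$ are finitely generated $\mathbf Z[\pi_1]$-modules, the Reidemeister torsion is a rational (hence continuous) function of the representation on the Zariski-open locus where it is acyclic. Applied in a neighborhood of $\rho$ and of $\rho^\tau$ respectively, continuity yields
\begin{equation*}
\tor(M,\rho)=\lim_{n\to\infty}\tor(M,\rho_n)=\lim_{n\to\infty}\tor(M^\tau,\rho_n^\tau)=\tor(M^\tau,\rho^\tau),
\end{equation*}
which proves the corollary.

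The only real subtlety, which is the point I would check most carefully, is that the approximating $\rho_n^\tau$ is genuinely the mutant of $\rho_n$ in the sense of Section~\ref{sec:mutation} (so that Lemma~\ref{lem:orbif} applies), rather than merely some other orbifold lift. This follows because mutation commutes with the orbifold construction: the Conway sphere $C$ sits in $S^3$ independently of the branching, and the amalgamation formula defining $\rho_n^\tau$ coincides with the holonomy of the mutated orbifold once the same element $a\in\operatorname{SL}(2,\mathbf C)$ (depending continuously on the restriction $\rho_n|_{\pi_1(S)}$) is used. Everything else is a routine application of semicontinuity of cohomology and continuity of torsion.
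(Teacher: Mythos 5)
Your proposal is correct and follows exactly the paper's route: the paper likewise deduces the corollary from Proposition~\ref{prop:orbifold} and Lemma~\ref{lem:orbif} simply ``by taking the limit when $n\to\infty$'', with the continuity of the torsion on the acyclic locus left implicit. Your added justifications (rationality of the torsion on the Zariski-open acyclic locus, and the check that $\rho_n^\tau$ is genuinely the mutant of $\rho_n$) only make explicit what the paper takes for granted.
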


The proofs for $\operatorname{trace}(\rho(\mu))=2$ and $\operatorname{trace}(\rho(\mu))=-2$ are quite different, because $H^1(\partial S_0;\rho_0)$ is non zero when the trace is +2, 
and vanishes when it is -2.
The generic case is 
$\operatorname{trace}(\mu)\neq 2$. The proof of Section~\ref{sec:trace-2} applies to the following situation.

\begin{proposition}
\label{prop:generic}
Let $\rho:\pi_1(M)\to \operatorname{SL}(2,\mathbf C)$ be a representation satisfying:
\begin{enumerate}
  \item $H^1(M;\rho)=0$;
  \item $\operatorname{trace}(\rho(\mu))\neq 2$;
 \item $\rho$ restricted to $\pi_1(S)$ is irreducible;
  \item the representation $\rho$ is in the same irreducible component of $R(M)$ as some representation such that the image of $i_2^*$ is contained in $Q_-$.
\end{enumerate} 
Then $\tor(M,\rho)=\tor(M^\tau,\rho)$.
\end{proposition}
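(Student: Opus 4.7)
The plan is to rerun the argument of Section~\ref{sec:trace-2} for $\rho$, replacing the geometric part of Lemma~\ref{lemma:almostequal-2} (which used the symmetric tangle $M_3$) by a deformation argument inside the irreducible component $V\subset R(M)$ provided by hypothesis~(4). First I would check that the cohomological setup of Subsection~\ref{subsection:perfectpairing} carries over. Hypothesis~(2) is exactly what the proof of Lemma~\ref{lemma:Spartial} requires (the authors note that only $\operatorname{trace}(\rho(\mu))\neq 2$ is used), so $H^*(\partial S;\rho_0)=0$. Hypothesis~(3) gives $H^0(S;\rho_0)=0$ and hence Lemma~\ref{lem:cohomS}. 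Hypothesis~(1), together with irreducibility, $\chi(M)=0$ and Poincar\'e--Lefschetz duality, forces $H^0=H^2=H^3=0$ as well, so $\rho$ is acyclic and Lemma~\ref{lem:cohomM_i} holds. Consequently the symmetric perfect pairing~(\ref{eqn:SS}) on $H^1(S;\rho_0)\cong\mathbf{C}^4$ is well defined, each $\operatorname{Im}(i_k^*)$ is a two-dimensional isotropic plane (Lemma~\ref{lem:isotropic}), and the discussion of the ruled quadric $Q$ with rulings $\mathcal{L}_\pm$ together with the action of the involutions $\tau_{a_i}^*$ (Propositions~\ref{prop:ruling}, \ref{prop:product} and Corollary~\ref{coro:tauasigmab}) applies to $\rho$ verbatim.

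Next I would run the deformation argument inside $V$. Each of conditions (1), (2), (3) is Zariski-open on $V$: (1) by upper semi-continuity of $\dim H^1$, (2) by the non-vanishing of $\operatorname{trace}(\cdot(\mu))-2$, and (3) because the reducible locus of $R(\pi_1(S))$ is Zariski-closed and the restriction map is algebraic. Their intersection is a Zariski-open $U\subset V$ containing $\rho$. Since $V$ is an irreducible complex variety, $U$ is classically path-connected, so I can choose a continuous path $\varphi_t$ in $U$ with $\varphi_0=\rho$ and $\varphi_1=\rho'$, the distinguished representation of hypothesis~(4), for which $\operatorname{Im}(i_2^*)\subset Q_-=\mathcal{L}_-$. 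On $U$ the spaces $H^1(M_2;\varphi_t)$ have constant dimension~$2$ and thus assemble into a holomorphic vector bundle, so $\operatorname{Im}(i_2^*(\varphi_t))$ varies continuously as a point of $Q$. By Corollary~\ref{cor:imi1i22} this point lies in $\mathcal{L}_+\cup\mathcal{L}_-$; since $\mathcal{L}_+\cap\mathcal{L}_-=\emptyset$ and the endpoint is in $\mathcal{L}_-$, the point lies in $\mathcal{L}_-$ throughout, in particular for $\rho$. Corollary~\ref{coro:tauasigmab}(i) then shows that $\tau_a^*$ preserves $\operatorname{Im}(i_2^*)$, and Proposition~\ref{prop:criterion} yields $\tor(M,\rho)=\pm\tor(M^\tau,\rho^\tau)$.

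To remove the sign, as in the proof of Proposition~\ref{proposition:equal-2}, I would observe that the ratio in~(\ref{eqn:ratiotorsions}) is a continuous nowhere-zero function of $t$ along $\varphi_t$ taking values in $\{\pm 1\}$, hence is constant; at $\rho'$ the symmetry provided by hypothesis~(4) forces its value to be $+1$. The main obstacle I anticipate is justifying the continuity claims along $\varphi_t$---specifically, choosing bases $b_k(t)$ of $H^1(M_k;\varphi_t)$ that depend continuously on $t$ so that (\ref{eqn:ratiotorsions}) really is a continuous function. This ultimately reduces to the local triviality of the cohomology bundles $H^1(M_k;\cdot)$ and $H^1(S;\cdot)$ on $U$, which follows from the local constancy of their dimensions by standard arguments on cohomology in families, but would need to be spelled out carefully to ensure that the holomorphic frames can be patched into a global continuous frame along the one-dimensional path.
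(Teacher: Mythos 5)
Your overall route is the one the paper intends: Proposition~\ref{prop:generic} is stated precisely so that hypothesis~(4) replaces the symmetric-tangle construction of Lemma~\ref{lemma:tangle_hyp} as the endpoint of the deformation, and the paper offers no further proof beyond ``the proof of Section~\ref{sec:trace-2} applies.'' Your verification that hypotheses (1)--(3) reinstate Lemmas~\ref{lem:cohomS}, \ref{lem:cohomM_i}, \ref{lemma:Spartial}, \ref{lem:isotropic} and the quadric discussion, and your replacement of the Ahlfors--Bers path by a path in the Zariski-open locus $U$ of the irreducible component (using connectedness of the complement of a proper subvariety and upper semi-continuity of $\dim H^1$), match the paper's argument and are sound.

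The one step that does not go through as written is the sign. You assert that ``at $\rho'$ the symmetry provided by hypothesis~(4) forces'' the ratio to be $+1$, but hypothesis~(4) provides no symmetry: $\rho'$ is only assumed to have $\operatorname{Im}(i_2^*)$ in the ruling $Q_-=\mathcal L_-$, and, unlike $\varphi_1$ in Proposition~\ref{proposition:equal-2}, it need not bound a $\tau_i$-invariant tangle. As stated, the value of the ratio at the endpoint is therefore not determined, and transporting an undetermined sign along the path proves nothing. The gap is repairable, and the repair also makes the continuity-of-the-ratio discussion (and your worry about continuous frames) unnecessary: writing $H^1(S;\rho_0)\cong V_1\otimes V_2$ with $\mathcal L_-=\{V_1\otimes v\}$, the condition that $\tau_a^*$ act trivially on $\mathcal L_-$ together with $(\tau_a^*)^2=-\Id$ forces $\tau_a^*=\pm A\otimes\Id$ with $A\in\operatorname{SL}(2,\mathbf C)$, $A^2=-\Id$; hence $\tau_a^*$ restricted to \emph{any} plane of $\mathcal L_-$ is conjugate to $\pm\operatorname{diag}(i,-i)$ and has determinant $+1$. (By contrast, on an invariant plane of $\mathcal L_+$ it acts as a scalar $\pm i$ and the determinant is $-1$, so the dichotomy is real.) Once your deformation argument places $\operatorname{Im}(i_2^*)$ in $\mathcal L_-$ for $\rho$ itself, the determinant in Equation~(\ref{eqn:ratiotorsions}) equals $+1$ directly, with no appeal to the endpoint.
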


\begin{corollary}
\label{cor:generic}
For a generic representation $\rho$ of the irreducible component of $R(M)$ that contains a lift of the holonomy,
 $\tor(M,\rho)=\tor(M^\tau,\rho^\tau)$.
\end{corollary}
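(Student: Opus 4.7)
The plan is to deduce the corollary directly from Proposition~\ref{prop:generic}: I would show that its four hypotheses hold on a non-empty Zariski open subset of the irreducible component $\mathcal{C} \subset R(M)$ containing a lift of the holonomy $\rho$. Conditions (1), (2), (3) of Proposition~\ref{prop:generic} are visibly Zariski open on $R(M)$: acyclicity $H^1(M;\rho)=0$ by upper semi-continuity of cohomology (as already used in Lemma~\ref{lem:orbif}); the trace condition $\operatorname{trace}(\rho(\mu)) \neq 2$ is the complement of a hypersurface; and irreducibility of $\rho|_{\pi_1(S)}$ is the complement of the Zariski closed locus of reducible restrictions. Condition (4), on the other hand, is a property of the whole component, so it only needs to be verified once.

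Next I would check that the three open conditions are non-empty on $\mathcal{C}$. The lift of the holonomy itself is acyclic by \cite{MenalPorti} and its restriction to $\pi_1(S)$ is irreducible (a standard consequence of discreteness and faithfulness of $\rho$ together with $S$ being essential and $\pi_1(S)$ non-abelian). Only condition (2) can fail at the holonomy, since $\operatorname{trace}(\rho(\mu)) = \pm 2$. To produce nearby points where (2) also holds I would invoke the orbifold representations $\rho_n$ from Proposition~\ref{prop:orbifold}: they lie in $\mathcal{C}$ for $n$ large (as $\rho_n \to \rho$ and $\mathcal{C}$ is an irreducible component through $\rho$), they satisfy $\operatorname{trace}(\rho_n(\mu)) = \pm 2\cos(\pi/n) \neq \pm 2$, and by Lemma~\ref{lem:orbif} they are acyclic, while their restriction to $\pi_1(S)$ remains irreducible by openness. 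Hence conditions (1)--(3) jointly define a dense Zariski open subset $U \subset \mathcal{C}$.

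The remaining point is condition (4). If $\mathcal{C}$ is the component of the lift with $\operatorname{trace}(\rho(\mu)) = -2$, Lemma~\ref{lemma:almostequal-2} directly exhibits a representation in $\mathcal{C}$ with $\operatorname{Im}(i_2^*) \in \mathcal{L}_-$. If instead $\mathcal{C}$ is the component of the lift with $\operatorname{trace}(\rho(\mu)) = +2$, then the orbifold representations $\rho_n$ lie in $\mathcal{C}$ for large $n$ and, as argued in the proof of Lemma~\ref{lem:orbif}, the whole deformation argument of Section~\ref{sec:trace-2} applies to them, yielding $\operatorname{Im}(i_2^*) \in \mathcal{L}_-$. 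Either way, condition (4) holds for every $\rho \in \mathcal{C}$. Applying Proposition~\ref{prop:generic} to any $\rho \in U$ then gives $\operatorname{tor}(M,\rho) = \operatorname{tor}(M^\tau, \rho^\tau)$. The main obstacle is mostly organizational rather than technical: the work of Sections~\ref{sec:trace-2} and~\ref{sec:trace+2} has already supplied the geometric input needed to verify condition (4), and here one simply has to combine the three pointwise open conditions with this component-wide fact.
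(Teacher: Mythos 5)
Your proposal is correct and follows the paper's (implicit) route exactly: the paper states the corollary as an immediate consequence of Proposition~\ref{prop:generic}, and your verification that hypotheses (1)--(3) cut out a non-empty Zariski open subset of the irreducible component while (4) is supplied once and for all by Lemma~\ref{lemma:almostequal-2} (trace $-2$) or by the orbifold representations $\rho_n$ (trace $+2$) is precisely the intended argument. The only point worth making explicit is that $\rho_n\to\rho$ alone does not force $\rho_n$ to lie in the chosen component; one should appeal to local irreducibility of $R(M)$ at the lift of the holonomy (Thurston's deformation theory), which at the same time guarantees that $\operatorname{trace}(\rho(\mu))$ is non-constant on that component, so condition (2) is indeed non-vacuous there.
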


\begin{question}
The holonomy representation of a hyperbolic knot has two lifts to $\operatorname{SL}(2,\mathbf C)$, each one with a different sign for the image of the meridian. 
Do they belong to the same irreducible component of the variety of representations?
\end{question}

This happens to be true for instance if the component of the variety of representations contains a dihedral representation, as this is a ramification point 
or the map from the variety of representations in $\operatorname{SL}(2,\mathbf C)$ to those in $\operatorname{PSL}(2,\mathbf C)$.

\section{Example: Kinoshita-Terasaka and Conway mutants}
\label{section:example}

Let $KT$ and $C$ be the Kinoshita-Terasaka knot and the Conway knot respectively.
It is well known that they are mutant hyperbolic knots. Using the Snap program \cite{CGHN}, based of J. Weeks' SnapPea \cite{Weeks}, we 
have obtained all the necessary information to compute their torsion.

The fundamental groups of these knots have the following presentations:
\begin{eqnarray*}
	 \pi_1(S^3\setminus C)  & = & \langle a b c  \mid abACbcbacBCABaBc, aBcBCABacbCbAbacbc  \rangle, \\
	\pi_1(S^3\setminus KT) & = & \langle a b c  \mid aBCbABBCbaBcbbcABcbbaB, abcACaB \rangle.
\end{eqnarray*}
As usual, capital letters denote inverse.

The image of the holonomy representation is contained in 
$\operatorname{PSL}(2, \mathbf{Q}(\omega))$
where $\mathbf{Q}(\omega)$ is the number field generated by a root $\omega$ of the following polynomial:
\[
p(x) = x^{11} - x^{10} + 3 x^9 - 4 x^8 + 5  x^7 - 8 x^6 + 8 x^5 - 5 x^4 + 6 x^3 - 5 x^2 + 2x -1.
\]
The torsions then are elements of $\mathbf{Q}(\omega)$. 
In order to express elements in $\mathbf{Q}(\omega)$, we use the $\mathbf{Q}$-basis
$(\omega^{10}, \omega^9, \cdots, \omega, 1)$. 
Tables~\ref{table:tra2} and \ref{table:tra-2} give the coefficients of the torsions of $KT$ and $C$ with respect to this $\mathbf Q$-basis. On each table,  the first column
gives the element of the basis. We let  $n$ denote the dimension of the irreducible representation of $\operatorname{SL}(2,\mathbf C)$ used to compute the torsion,
and the tables show the values for $n=2$ (i.e. the standard representation), but also  $n=4$ and $n=6$.
In order to compare them, the coefficients of the torsion for Kinoshita-Terasaka ($KT$) and Conway ($C$) knots are tabulated side by side.
We give a table for each lift of the holonomy, one when the trace of the meridian is $2$ (Table~\ref{table:tra2}) and another when it is $-2$ (Table~\ref{table:tra-2}).

\begin{table}
\begin{small}
 \hspace{-1.6cm}
\begin{tabular}{cc|r|r|r|r|r|r|}
	 \cline{3-8}
	& & \multicolumn{2}{|c|}{ $n=2$} & \multicolumn{2}{|c|}{ $n=4$} & \multicolumn{2}{|c|}{$n=6$}  \\ \cline{1-8}
\multicolumn{2}{ |c| }{  } & \multicolumn{1}{|c|}{$KT$} & \multicolumn{1}{|c|}{$C$} &  \multicolumn{1}{|c|}{$KT$} & \multicolumn{1}{|c|}{$C$} &   \multicolumn{1}{|c|}{$KT$} & \multicolumn{1}{|c|}{$C$}   \\ \cline{1-8}
\multicolumn{2}{ |c| }{ $\omega^{10}$} &$356$ & $356$ & $11112880$ & $11112880$ & $676803770859632$ & $662357458754672$ \\ \cline{1-8}
\multicolumn{2}{ |c| }{ $\omega^{9}$} &$-620$ & $-620$ & $-38963592$ & $-38963592$ & $-640579476284656$ & $-579216259622896$ \\ \cline{1-8}
\multicolumn{2}{ |c| }{ $\omega^{8}$} &$636$ & $636$ & $36107416$ & $36107416$ & $212555254795952$ & $153724448856752$ \\ \cline{1-8}
\multicolumn{2}{ |c| }{ $\omega^{7}$} &$-864$ & $-864$ & $-31579196$ & $-31579196$ & $-990061444305088$ & $-943617945204928$ \\ \cline{1-8}
\multicolumn{2}{ |c| }{ $\omega^{6}$} &$1228$ & $1228$ & $60889040$ & $60889040$ & $1004678681648016$ & $908722528184976$ \\ \cline{1-8}
\multicolumn{2}{ |c| }{ $\omega^{5}$} &$-1080$ & $-1080$ & $-58195768$ & $-58195768$ & $-444238765345264$ & $-349679698188784$ \\ \cline{1-8}
\multicolumn{2}{ |c| }{ $\omega^{4}$} &$780$ & $780$ & $36555000$ & $36555000$ & $482101712163904$ & $424247992815424$ \\ \cline{1-8}
\multicolumn{2}{ |c| }{ $\omega^{3}$} &$-628$ & $-628$ & $-31740272$ & $-31740272$ & $-371824600930944$ & $-320894530449024$ \\ \cline{1-8}
\multicolumn{2}{ |c| }{ $\omega^{2}$} &$428$ & $428$ & $21313180$ & $21313180$ & $51168266257072$ & $15655188602032$ \\ \cline{1-8}
\multicolumn{2}{ |c| }{ $\omega^{1}$} &$-188$ & $-188$ & $-8829332$ & $-8829332$ & $-165869512283168$ & $-152117462516768$ \\ \cline{1-8}
\multicolumn{2}{ |c| }{ $1$ } &$124$ & $124$ & $7476160$ & $7476160$ & $-37602419304496$ & $-50452054740016$ \\ \cline{1-8}
\end{tabular} \end{small}
\caption{\normalsize{Torsions for the lift of the holonomy  with trace of the meridian $2$. The table gives the coefficients of the torsion
of $n$-dimensional representation
(with respect to a $\mathbf Q$-basis for $\mathbf{Q}(\omega)$).}}
\label{table:tra2}
\end{table}

\begin{table}
\begin{footnotesize}
\hspace{-1.8cm}
\begin{tabular}{cc|r|r|r|r|r|r|}
      \cline{3-8}
	& & \multicolumn{2}{|c|}{ $n=2$} & \multicolumn{2}{|c|}{ $n=4$} & \multicolumn{2}{|c|}{$n=6$ }  \\ \cline{1-8}
\multicolumn{2}{ |c| }{  } & \multicolumn{1}{|c|}{$KT$} & \multicolumn{1}{|c|}{$C$} &  \multicolumn{1}{|c|}{$KT$} & \multicolumn{1}{|c|}{$C$} &   \multicolumn{1}{|c|}{$KT$} & \multicolumn{1}{|c|}{$C$}   \\ \cline{1-8}
\multicolumn{2}{ |c| }{ $\omega^{10}$} &$7352$ & $7352$ & $-106244812$ & $-84923788$ & $-5089618734386048$ & $-5181970358958464$ \\ \cline{1-8}
\multicolumn{2}{ |c| }{ $\omega^{9}$} &$12100$ & $12100$ & $-40892392$ & $-98464552$ & $26333637242897408$ & $26767528167113984$ \\ \cline{1-8}
\multicolumn{2}{ |c| }{ $\omega^{8}$} &$-18868$ & $-18868$ & $135740632$ & $176373400$ & $-26132678464882128$ & $-26556943437149136$ \\ \cline{1-8}
\multicolumn{2}{ |c| }{ $\omega^{7}$} &$-16$ & $-16$ & $81031412$ & $30483572$ & $18961525460403712$ & $19282331500463872$ \\ \cline{1-8}
\multicolumn{2}{ |c| }{ $\omega^{6}$} &$-19124$ & $-19124$ & $70025564$ & $154082012$ & $-41268295304316624$ & $-41948393922548432$ \\ \cline{1-8}
\multicolumn{2}{ |c| }{ $\omega^{5}$} &$29448$ & $29448$ & $-188927128$ & $-264857368$ & $41815776250571680$ & $42495766908786848$ \\ \cline{1-8}
\multicolumn{2}{ |c| }{ $\omega^{4}$} &$-14272$ & $-14272$ & $71097428$ & $118825172$ & $-25207995553964480$ & $-25621419777084608$ \\ \cline{1-8}
\multicolumn{2}{ |c| }{ $\omega^{3}$} &$13576$ & $13576$ & $-71628932$ & $-116091140$ & $22311420427155024$ & $22676270315709264$ \\ \cline{1-8}
\multicolumn{2}{ |c| }{ $\omega^{2}$} &$-13352$ & $-13352$ & $98553148$ & $124139068$ & $-15990083236426320$ & $-16248280122238544$ \\ \cline{1-8}
\multicolumn{2}{ |c| }{ $\omega^{1}$} &$2780$ & $2780$ & $-4562444$ & $-18136844$ & $5898804809613840$ & $5996288593045520$ \\ \cline{1-8}
\multicolumn{2}{ |c| }{ $1 $} &$-5812$ & $-5812$ & $48068144$ & $56560304$ & $-5891958922292320$ & $-5986195442605152$ \\ \cline{1-8}

\end{tabular}
\caption{\normalsize{Torsions for the lift of the holonomy  with trace of the meridian $-2$, for the $n$-dimensional representations. 
 Again the table gives the coefficients with respect to a $\mathbf Q$-basis for $\mathbf{Q}(\omega)$. }}
\label{table:tra-2}
\end{footnotesize}
\end{table}

Of course, for $n=2$ and for any lift of the holonomy, the torsion of $KT$ and the torsion of $C$ is the same.
 Notice that for the 4-dimensional representation, they are also the same for one lift but different for the other,
and that they differ for both lifts when we use  the 6-dimensional representation.

As said in the introduction, when $n=2$, these had been computed   by Dunfield, Friedl and Jackson in \cite{DFJ}.
They computed numerically a twisted Alexander invariant (which are not mutation invariant) for all knots up to 15 crossings, and the torsions
computed here are just the evaluations at $\pm 1$.


\section{Other mutations and other representations}
\label{sec:concluding}

The mutation considered in this paper is called $(0,4)$-mutation, because the involved surface is planar and has 4 boundary components.
By tubing along invariant arcs of the knot, this is a particular case of the so called $(2,0)$-mutation, namely, the mutation along a closed surface of genus 2 and using the hyperelliptic involution.

 In \cite{Ruberman}
Ruberman proved that a $(2,0)$-mutant of a hyperbolic manifold is again hyperbolic. The behavior of invariants under $(2,0)$-mutation
has been investigated by many authors, see \cite{CooperLickorish,DGST,MortonRyder} for instance. Unfortunately, our arguments do not apply, as in Section~\ref{sec:trace-2}
we require two involutions, and in genus two mutation we can use only the hyperelliptic involution. So we arise: 

\begin{question}
 Is $\tor(M,\rho)$ invariant under genus two mutation?
\end{question}

The three dimensional representation of $\operatorname{SL}(2,\mathbf C)$ is conjugate to the adjoint representation in the automorphism group of the Lie algebra
$\mathfrak{sl}(2,\mathbf C)$. The representation $Ad\rho$ is not acyclic, but a natural choice of basis for homology has been given in \cite{Porti},
hence its torsion is well defined. Moreover, we have:

\begin{proposition}[\cite{Porti}]
 The torsion $\tor(M,Ad\rho)$ is invariant under 
 $(2,0)$-mutation.
\end{proposition}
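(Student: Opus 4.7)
My plan is to lift the Mayer--Vietoris / Lagrangian strategy of Sections 2--3 to a closed genus two surface $\Sigma_2$ and to the adjoint representation. First, decompose $M = M_1 \cup_{\Sigma_2} M_2$ along the essential genus two surface along which the $(2,0)$-mutation is performed, and let $\iota$ be the hyperelliptic involution of $\Sigma_2$. There exists $a \in \operatorname{PSL}(2,\mathbf{C})$, corresponding to an elliptic element of order two, with $\rho_0^a \circ \iota_* = \rho_0$ where $\rho_0$ is the restriction of $\rho$ to $\pi_1(\Sigma_2)$. Passing to the adjoint, $\operatorname{Ad}(a)$ is a well-defined involution of $\mathfrak{sl}(2,\mathbf{C})$ with eigenvalues $(+1,-1,-1)$. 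Using Porti's canonical basis of $H^*(M;\operatorname{Ad}\rho)$ constructed from the torus cusp, Milnor's multiplicativity formula for Mayer--Vietoris reduces the ratio
\[
\tor(M,\operatorname{Ad}\rho)/\tor(M^\iota,\operatorname{Ad}\rho^\iota)
\]
to the determinant of a base-change on $H^1(\Sigma_2;\operatorname{Ad}\rho_0)$ involving $\operatorname{Im}(i_1^*)$, $\operatorname{Im}(i_2^*)$ and the image of the latter under $\iota_a^* := \operatorname{Ad}(a) \circ \iota^*$.

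Second, I would equip $H^1(\Sigma_2;\operatorname{Ad}\rho_0)$ with the Atiyah--Bott--Goldman symplectic form, obtained by composing the cup product with the Killing form on $\mathfrak{sl}(2,\mathbf{C})$. Since $\Sigma_2$ is closed, Poincar\'e duality gives non-degeneracy for free, with no analogue of Subsection~3.1 required; the form is antisymmetric because the Killing form is symmetric and the cup product on $H^1$ of a surface is antisymmetric. By Euler characteristic, $\dim H^1(\Sigma_2;\operatorname{Ad}\rho_0) = -\chi(\Sigma_2)\dim\mathfrak{sl}(2,\mathbf{C}) = 6$, and the ``half-lives-half-dies'' principle together with the argument of Lemma~3.4 show that $\operatorname{Im}(i_k^*) \subset H^1(\Sigma_2;\operatorname{Ad}\rho_0)$ is a Lagrangian subspace of dimension three.

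Third, I would analyse $\iota_a^*$, which is a genuine involution of $H^1(\Sigma_2;\operatorname{Ad}\rho_0)$ preserving the symplectic form (since $\operatorname{Ad}(a)^2 = \operatorname{Id}$ and $\iota^2 = \operatorname{Id}$). The analogue of Proposition~2.1 demands that $\iota_a^*$ leave $\operatorname{Im}(i_2^*)$ invariant, in which case the ratio of torsions is $\pm 1$. The main obstacle is that, unlike the $(0,4)$-case, only one nontrivial involution is available (the hyperelliptic is the unique involution realising $(2,0)$-mutation up to isotopy), so the Klein four argument of Corollary~3.13 is unavailable. Instead I would run the deformation argument of Subsection~3.3 with a reference tangle $M_3$ admitting a $\iota$-equivariant hyperbolic metric; such models exist because genus two handlebodies admit hyperbolic structures symmetric under the hyperelliptic, and their quotients by $\iota$ produce suitable symmetric pared manifolds. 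Propagating the Lagrangian condition along a path of representations as in Lemma~3.16 then yields the $\iota_a^*$-invariance of $\operatorname{Im}(i_2^*)$ for the given $\rho$; the crucial use of the non-trivial fixed subspace of $\operatorname{Ad}(a)$ on $\mathfrak{sl}(2,\mathbf{C})$ takes the place of the Klein four obstruction.

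Finally, the sign ambiguity is removed by the same continuity argument: at the symmetric endpoint the determinant is $+1$ because $\iota_a^*$ is then induced by an orientation preserving isometry of $M_2$, and the determinant is locally constant along the path of representations since the eigenvalues of $\iota_a^*$ restricted to $\operatorname{Im}(i_2^*)$ are constrained to $\{\pm 1\}$. Combining the four steps yields $\tor(M,\operatorname{Ad}\rho) = \tor(M^\iota,\operatorname{Ad}\rho^\iota)$, which is the claimed invariance.
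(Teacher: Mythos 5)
Your overall framework (Mayer--Vietoris, the symplectic form on $H^1(\Sigma_2;\operatorname{Ad}\rho_0)\cong\mathbf C^6$, Lagrangian images) is reasonable, but the argument breaks at the propagation step. In the $(0,4)$ case the continuity argument works only because the isotropic planes of $(\mathbf C^4,\cup)$ split into two \emph{disjoint closed} families $\mathcal L_+\sqcup\mathcal L_-$, the involutions act trivially on all of $\mathcal L_-$, and a continuously varying isotropic plane cannot jump between the families; so invariance at the symmetric endpoint forces $\operatorname{Im}(i_2^*)\in\mathcal L_-$ for all $t$. For Lagrangian $3$-planes in a $6$-dimensional symplectic space there is no such dichotomy: the Lagrangian Grassmannian is connected, and the locus of Lagrangians invariant under a fixed symplectic involution $\iota_a^*$ is a closed subvariety that is \emph{not} open. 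Hence knowing that $\operatorname{Im}(i_2^*)$ is $\iota_a^*$-invariant at $t=1$ tells you nothing about $t=0$; your ``propagating the Lagrangian condition along a path'' has no mechanism behind it, and the appeal to ``the non-trivial fixed subspace of $\operatorname{Ad}(a)$'' does not supply one. (A secondary gap: since $\operatorname{Ad}\rho$ is not acyclic, $H^1(M;\operatorname{Ad}\rho)$ and $H^2(M;\operatorname{Ad}\rho)$ are nonzero and enter the Mayer--Vietoris sequence, so the reduction to a single base-change determinant on $H^1(\Sigma_2;\operatorname{Ad}\rho_0)$ needs more care than in the acyclic case.)

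The fact you are missing, and which makes the paper's proof a one-liner, is that the hyperelliptic involution acts \emph{trivially} on the character variety of the genus two surface: it preserves the trace of every element, and characters of $\operatorname{SL}(2,\mathbf C)$-representations are determined by traces. Since $H^1(\Sigma_2;\operatorname{Ad}\rho_0)$ is the (co)tangent space of the character variety at the fixed character $[\rho_0]$, and $\iota_a^*=\operatorname{Ad}(a)_*\circ\iota^*$ is precisely the differential of that trivial action, one gets $\iota_a^*=\operatorname{Id}$ on all of $H^1(\Sigma_2;\operatorname{Ad}\rho_0)$. Every subspace is then invariant, the base-change determinant is $1$, and no deformation, no reference tangle, and no Lagrangian analysis are needed. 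This also explains why the adjoint case succeeds where the standard representation does not: for $\mathbf C^2$-coefficients the cohomology of $S$ is not the tangent space to the character variety, so the triviality of the hyperelliptic action gives no information there.
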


The proof is straightforward, as $H^1(S; Ad\rho_0)$ is the cotangent space to the variety of characters of $S$, and the action of the hyperelliptic
involution is trivial on the variety of characters of $S$.

We have seen that  if we compose the lift of the holonomy with the 6-dimensional representation of $\operatorname{SL}(2,\mathbf C)$
(or the 4-dimensional one when the trace of the meridian is $-2$), the torsion is not invariant under $(2,0)$-mutation,
as it is not invariant under $(0,4)$-mutation, see the example of the previous section.

\begin{question}
Working with the lift of the holonomy with trace of the meridian 2, is the torsion of the 4-dimensional representation invariant under $(0,4)$-mutation?
\end{question}

To conclude, we notice that our arguments do not apply if we tensorize $\rho:\pi_1(M)\to \operatorname{SL}(2,\mathbf C)$ with the abelianization map
$\pi_1(M)  \twoheadrightarrow \mathbf Z=\langle t\mid\rangle$.
This torsion gives the twisted polynomial in $\mathbf C[t^{\pm}]$ studied in \cite{DFJ},
where it is proved not to be mutation invariant.
If we could apply the arguments of this paper, then we would be in the generic situation of Proposition~\ref {prop:generic} and Section~\ref{sec:trace-2},
 because  $\operatorname{trace}(\rho(\mu))=\pm (t+1x/t)\neq 2$.
 Notice that two of the involutions in Figure~\ref{fig:involutions} reverse the orientation of the meridian, and only one preserves them.
Thus we can use only a single involution, and at least two involutions are required in our argument from Section~\ref{sec:trace-2}, more precisely in 
Corollary~\ref{coro:tauasigmab}(ii).

\

\begin{footnotesize}

\textsc{Departament de Matem\`atiques, Universitat Aut\`onoma de Barcelona.}

\textsc{08193 Cerdanyola del Vall\`es, Spain}

{pmenal@mat.uab.cat, porti@mat.uab.cat}

\bibliographystyle{plain}



\end{footnotesize}

\end{document}